\newtheorem{thm}{Theorem}
\newtheorem{lem}[thm]{Lemma}
\newtheorem{assum}[thm]{Assumption}
\theoremstyle{definition}
\newtheorem{rem}[thm]{Remark}
\title{ Convergence Analysis of Classes of Asymmetric Networks of Cucker-Smale Type \\ with Deterministic Perturbations}
\author{Christoforos Somarakis$^{1}$, Evripidis Paraskevas$^{2}$, John S. Baras $^{2}$ and  Nader Motee $^{1}$  
\thanks{$^{1}$C. Somarakis and N. Motee are with the Department of Mechanical Engineering, Lehigh University,
        Bethlehem, PA, 18015
        {\tt\small \{csomarak,motee\}@lehigh.edu}}%
\thanks{$^{2}$E. Paraskevas and J. S. Baras are with the Department of Electrical Engineering, University of Maryland,
        College Park, MD 20740, USA
        {\tt\small \{evripar,baras\}@umd.edu}}%
}
\begin{document}

\maketitle

\begin{abstract}
We introduce and discuss two nonlinear perturbed extensions of the Cucker-Smale model with asymmetric coupling weights. The first model assumes a finite collection of autonomous agents aiming to perform a consensus process in the presence of identical internal dynamics. The second model describes a similar population of agents that perform velocity alignment with the restriction of collision-free orbits. Although qualitatively different, we explain how these two non-trivial types of perturbations are analyzed under a unified framework. Rigorous analysis is conducted towards establishing new sufficient conditions for asymptotic flocking to a synchronized motion.  Applications of our results are compared with simulations to illustrate the effectiveness of our theoretical estimates.
\end{abstract}

\section{INTRODUCTION}
For the past two decades, there has been a broad interest in the study of co-operative dynamic algorithms that run among a finite number of interconnected entities. Perhaps the most prominent family is this of consensus networks. The standard setting regards a finite number of agents $n$, where  state of each agent $z_i \in \mathbb R$ for $i=1,\dots,n$ is governed by the following protocol \begin{equation}\label{eq: linear}\begin{split} 
\dot z_i~=~\sum_{j=1}^{n} w_{ij}(t)(z_j -z_i ).
 \end{split}\end{equation} 
The nonnegative numbers $w_{ij}$ are coupling weights that characterize the effect of  state of agent $j$ onto the rate of change of the state of agent $i$. Certain criteria imposed on $w_{ij}$ ensure the desired asymptotic consensus property $$z_i(t)\rightarrow z^*~\text{as}~t\rightarrow \infty~\text{for}~i=1,\dots,n.$$ 
The common equilibrium point $z^*\in \mathbb R$ lies in the set defined by the convex hull of the initial values. The research on connectivity conditions for \eqref{eq: linear} is a saturated subject (see \cite{Jadbabaie_Lin_Morse_2003,Mesbahi_Egerstedt_2010,Moreau05,Olfati04,Tsitsiklis_Bertsekas_Athans_1986,6213081} and references therein). On the contrary, there are relatively few works in non-linear consensus dynamics. In the next subsection we review a few such extensions of \eqref{eq: linear}. 
\subsection{Types of Nonlinear Consensus Protocols}
In \cite{Moreau_2004} and \cite{MANFREDI201751} the authors study necessary and sufficient conditions of convergence to consensus in discrete and continuous time versions of non-linear networked co-operative systems represented by  
\begin{equation*}
\dot x_i(t+1)=f(t,x_1(t),\dots,x_n(t)\big)
\end{equation*} 
and 
\begin{equation*}
\dot x_i=f(t,x_1,\dots,x_n),
\end{equation*}respectively.
%
In \cite{Arcak_2007,Hui20082375}, the authors investigate nonlinear versions of \eqref{eq: linear}  
\begin{equation}\tag{1.2}\label{eq: non2}
\dot z_i=\sum_{j=1}^n~w_{ij}(t,z_j-z_i)
\end{equation} 
and 
\begin{equation}\tag{1.3}\label{eq: non3}
\dot z_i=\sum_{j=1}^nw_{ij}(t,z_j)-\sum_{j=1}^nw_{ij}(t,z_i).
\end{equation} 
In network \eqref{eq: non2}, the analysis is based on passivity properties of the coupling functions $w_{ij}$. For the second model given by \eqref{eq: non3}, the investigation on the stability of the  consensus network is derived through the standard Lyapunov methods combined with LMI techniques.  In \cite{Cucker_Smale_2007,Cucker_Smale_2007_2},  the authors introduce a dynamical network that models the emergence of bird flocking. In their setting, the state of each agent $i$ consist of its vector position $x_i \in \mathbb R^{r}$ and velocity $v_i \in \mathbb R^{r}$, which is denoted by $(x_i,v_i) \in \mathbb R^{r}\times \mathbb R^{r}$. For an initial configuration $(x_i^0,v_i^0)$ for $i=1,\ldots,n$, the governing dynamics of the network are given by
\begin{equation}\label{eq: csflock}
\begin{split}
\dot x_i&=v_i \\
\dot v_i&=\sum_{j=1}^n~w_{ij}(x)(v_j-v_i).
\end{split}
\end{equation}
The class of dynamical networks \eqref{eq: csflock} are called as second-order flocking schemes. The simpler linear versions, where it is assumed that weights $w_{ij}$ are independent of the state, have  also appeared in \cite{Yu2010} and \cite{MARTIN201453}. In \eqref{eq: csflock}, the coupling weights are assumed to be decreasing functions of distance in the following form 
\begin{equation}\label{eq: csrates}w_{ij}(x)=\frac{K}{(\sigma^2+\|x_i-x_j\|^2)^{\beta}},
\end{equation} 
where $K, \sigma, \beta >0$ are coupling parameters and $\|\hspace{0.03cm}.\hspace{0.03cm}\|$ is the Euclidean norm.   
The explicit choice of $w_{ij}$  illustrates that the further the relative distance between two agents is, the less effect they have on each other. It may then occur that the agents will not be positioning themselves sufficiently close. This may in turn result the network to fail to preserve strong enough connectivity to achieve global speed alignment. The objective in \eqref{eq: csflock} is to derive initial conditions that ensure a strong enough network to allow global flocking to a common consensus state. The authors exploit the explicit form of $w_{ij}$, in particular its symmetry, and use algebraic graph theory to establish a stability condition that involves the initial data and the parameters $K,\sigma$ and $\beta$ from \eqref{eq: csrates}. Since the seminal work of Cucker and Smale, the network model \eqref{eq: csflock} has received considerable attention and it has been improved in several ways. In \cite{Ha_Liu_09}, a simple proof for general vanishing symmetric couplings is provided. In \cite{6787018} the authors derive convergence results for general symmetric weights without the infininite distance connectivity condition. In \cite{Motsch_Tadmo_14} extend the results for asymmetric couplings. In \cite{SomBarifac15} and \cite{SomBarecc15}, the authors provide extensions to asymmetric couplings with simple and switching connectivity in presence of time delay. Works \cite{Cucker_Mordecki_2008} and \cite{Cucker_2011} examine \eqref{eq: csflock} under the effect of additive noise and repelling symmetric functions, respectively.
 A general convergence result that summarizes the contribution of most of the aforemenioned works, illustrates the kind of sufficient conditions for global asymptotic velocity alignment while the network does not get dissolved. \begin{thm}[\cite{Motsch_Tadmo_14,SomBarecc15}]\label{thm: standardflocking}
Suppose that the nonlinear dynamical network  \eqref{eq: csflock} with coupling weights $w_{ij}(x)\geq \psi(\max_{ij}|x_i-x_j|)$, where $\psi$ is a non-negative integrable function. If the initial data satisfy $$\max_{i,j}|v_i^0-v_j^0|<\int_{\max_{i,j}|x_i^0-x_j^0|}^\infty \psi(s)\,ds,$$ then the solution $(x,v)$ of \eqref{eq: csflock} satisfies
\begin{description}
\item[(i)]  $|v_i(t)-v_j(t)|\rightarrow 0$  as  $t \rightarrow \infty$,
\item[(ii)] $\displaystyle \sup_{t \geq 0}\hspace{0.05cm}|x_i(t)-x_j(t)|<\infty$,~ for $i,j=1,\ldots,n$. 
\end{description}
\end{thm}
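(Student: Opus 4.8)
The natural route is the ``two--diameters'' scheme behind the simple proofs of Ha--Liu and Motsch--Tadmor, specialised to the asymmetric weights considered here. First introduce the position and velocity diameters
\begin{equation*}
X(t):=\max_{i,j}\|x_i(t)-x_j(t)\|,\qquad V(t):=\max_{i,j}\|v_i(t)-v_j(t)\|,
\end{equation*}
which are locally Lipschitz along the (smooth, locally well--posed) solution, and the goal is to replace the $2rn$ coupled equations by two scalar differential inequalities. The kinematic one is immediate: for an index pair $(k,l)$ active for $X(t)$, $D^+X(t)\le\|v_k-v_l\|\le V(t)$, where $D^+$ is the upper Dini derivative.

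The crux is the decay inequality for $V$. Fix $t$ and a pair $(k,l)$ active for $V(t)$; since $X,V$ are maxima of finitely many smooth functions, they are differentiable a.e.\ with derivative given by an active pair, so at a.e.\ $t$ one has $D^+\bigl(\tfrac12 V(t)^2\bigr)\le\langle v_k-v_l,\dot v_k-\dot v_l\rangle$. Expanding $\dot v_k-\dot v_l$ from \eqref{eq: csflock}, the maximality of $\|v_k-v_l\|$ among all pairwise velocity differences forces $\langle v_k-v_l,v_j-v_k\rangle\le0$ and $\langle v_k-v_l,v_j-v_l\rangle\ge0$ for every $j$ (each a one--line consequence of Cauchy--Schwarz). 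As all weights are nonnegative, every cross term is then non-positive, and retaining only the $j=l$ term of $\dot v_k$ and the $j=k$ term of $\dot v_l$ gives $D^+\bigl(\tfrac12 V(t)^2\bigr)\le-\bigl(w_{kl}(x)+w_{lk}(x)\bigr)V(t)^2\le-\psi\bigl(X(t)\bigr)V(t)^2$, hence
\begin{equation*}
D^+V(t)\le-\psi\bigl(X(t)\bigr)\,V(t).
\end{equation*}

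These are coupled through a Lyapunov functional. Put $\Psi(r):=\int_{X(0)}^{r}\psi(s)\,ds$, so $\Psi$ is non-decreasing, absolutely continuous, $\Psi(X(0))=0$, and $\Psi(+\infty)=\int_{X(0)}^{\infty}\psi$. Since $\Psi'=\psi\ge0$ and $D^+X\le V$, the functional $\mathcal E(t):=V(t)+\Psi(X(t))$ satisfies $D^+\mathcal E(t)\le-\psi(X)V+\psi(X)V=0$, so $\Psi(X(t))\le\mathcal E(t)\le\mathcal E(0)=V(0)$ on the maximal interval of existence. The hypothesis is exactly $V(0)<\Psi(+\infty)$; thus $\Psi(X(t))$ stays bounded away from $\Psi(+\infty)$, and since $\Psi$ is continuous and increases to $\Psi(+\infty)$, this is possible only if $X^*:=\sup_{t\ge0}X(t)<\infty$. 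Boundedness of $X$ (hence of $V$, hence of the right--hand side of \eqref{eq: csflock}) upgrades the local solution to a global one and gives (ii). For (i), use that $\psi$ is non-increasing (the standing assumption on influence functions, met by \eqref{eq: csrates}), so $\psi(X(t))\ge\psi(X^*)>0$ for all $t$; inserting this constant lower bound into $D^+V\le-\psi(X)V$ and applying Gr\"onwall's inequality yields $V(t)\le V(0)\,e^{-\psi(X^*)t}\to0$.

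I expect the decay inequality for $V$ to be the only delicate point: one must make the Dini--derivative manipulation of the non-smooth maxima $X,V$ rigorous, and — the heart of the matter — verify that the sign bookkeeping eliminates \emph{all} off-diagonal coupling terms \emph{without} invoking symmetry of $w_{ij}$ or any spectral/algebraic--graph property, which is precisely what replaces the graph-Laplacian argument of the original Cucker--Smale proof and is the only place the structure $w_{ij}\ge\psi(X)$ is used. Once this inequality and the trivial $D^+X\le V$ are in hand, the Lyapunov functional $V+\Psi(X)$, the continuity argument bounding $X$, and the Gr\"onwall step are all routine.
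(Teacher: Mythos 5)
Your proposal is correct, but it follows a genuinely different route from the machinery this paper builds around Theorem \ref{thm: standardflocking}. The paper itself states the theorem as a cited result and does not reprove it; its own proofs of the generalizations (Lemma \ref{lem: difineq} and Theorem \ref{thm: main1}, which reduce to Theorem \ref{thm: standardflocking} when $g\equiv 0$) work coordinate-wise with the $\ell^\infty$-type spread $S(v)=\max_l\max_{i,j}|v_i^{(l)}-v_j^{(l)}|$ and obtain the decay inequality $\frac{d}{dt}S(v)\le -n\psi(S(x))S(v)$ via Markov's contraction coefficient: the velocity equation is rewritten as $\dot v_j=-mv_j+\sum_k\tilde w_{jk}v_k$ with constant row sums, and the spread contracts by the factor $m-\sum_k\min\{\tilde w_{ik},\tilde w_{i'k}\}$ as in \eqref{eq: cc}. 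You instead use the Euclidean diameter $V(t)=\max_{i,j}\|v_i-v_j\|$ and the maximality/inner-product argument of Ha--Liu: your sign observations $\langle v_k-v_l,v_j-v_k\rangle\le 0$ and $\langle v_k-v_l,v_j-v_l\rangle\ge 0$ are correct consequences of maximality, they require no symmetry of $w_{ij}$, and the resulting $D^+V\le-\psi(X)V$ together with the Lyapunov functional $V+\Psi(X)$ closes the argument exactly as the paper closes Theorem \ref{thm: main1}. What each approach buys: yours is shorter and entirely elementary for the unperturbed model, though as written it discards a factor (keeping all $j$-terms rather than only $j=l$ and $j=k$ would recover $-n\psi(X)V^2$, matching the paper's sharper constant); the contraction-coefficient route is what scales to the paper's perturbed models, since the internal dynamics $g$ and the repelling terms $b_{ij}$ enter simply as modifications of the matrix entries $\tilde w_{ij}$ (producing the $K$ and $\Gamma_{i,i'}$ corrections), whereas the Euclidean maximality trick does not absorb those terms as cleanly. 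One point to make explicit: your step (i) needs $\psi(X^*)>0$, which uses that $\psi$ is non-increasing; the theorem as displayed only says ``non-negative integrable,'' but you correctly flag this, and monotonicity is indeed the paper's standing hypothesis (Assumption \ref{assum: coupling}).
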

The technical argument behind the inequality condition in Theorem \ref{thm: standardflocking} involves a rate of convergence estimate of the state of the network to its equilibrium that explicitly depends on the network parameters. 
The appropriate theoretical machinery is provided through algebraic graph theory for symmetric couplings \cite{Chung_1997}  or through non-negative matrix theory \cite{Hartfiel,Seneta_2006}. All the above references focus on \eqref{eq: csflock} in exclusiveness. The stability conditions of Theorem \ref{thm: standardflocking} shed no light in the event that the agents are expected to collectively execute a more complex task different than convergence to a consensus state. 
  
\subsection{Our Contributions}

In this paper, we introduce and discuss a class of extensions of the classic Cucker-Smale model represented by \eqref{eq: csflock} that considers  nonlinear deterministic perturbations. These models describe how a group of agents can benefit from asymmetric state-dependent graphs with spatially-decaying couplings to perform more elaborate collaborative tasks, rather than performing the simple task of converging to a consensus state.

The importance of considering perturbed versions of \eqref{eq: csflock} is to understand the inherent interplay between being able to accomplish complex collective behaviors and the coupling structure of the underlying dynamical network.  Our main goal is to investigate to what extent we can push the envelope to modify existing gold-standard and well-studied dynamical networks to allow them to exhibit more complex collective behaviors. Towards this end, we address two types of state-dependent perturbations, relevant to network models of type \eqref{eq: csflock} that describe real-world paradigms.  


The first extension takes into account the scenario of agents aiming to perform a consensus process in the presence of an internal generic dynamic rule. For instance, birds in a flock can fly individually or in coordination with other birds, possibly towards a synchronized complex motion different than a simple velocity co-ordination. This feature is modeled with an extra non-linear term in \eqref{eq: csflock} that makes the inequality flocking condition of Theorem \ref{thm: standardflocking} inapplicable. In Section \ref{subsec:synch}, it is shown how this class of networks can be analyzed. 

The second extension is a perturbation of \eqref{eq: csflock} that destabilizes the coordination process so long as the relative position of agents falls below a certain safety  margin. Such a safety restriction can prevent a network with fast decaying coupling strength from achieving global convergence. Similar to the first case, an extended version of Theorem \ref{thm: standardflocking} is needed to ensure flocking conditions. In Section \ref{subsec:colli}, we show that under a new set of technical conditions, design of flocking without collision can be achieved. 


Although qualitatively different, we demonstrate how these two non-trivial types of perturbations can be analyzed under a unified framework. We explore the resulting dynamics and establish sufficient stability relations between the nonlinear coupling forces and the inherent dynamics, towards collective convergence within a connected network. Furthermore, we discuss extensions of our results for networks with non-linear coupling terms and explain the potential difficulties that can occur in the event of looser connectivity. Finally, several carefully tailored  examples are discussed to truthfully verify the strength of the imposed conditions.

%
%

\subsection{Literature Review} The subject of synchronization is rather mature and well-documented in the context of linear networks, with static or varying communication topologies (see \cite{WIELAND20111068,gonzalez2004synchronization} for control theoretic perspective or \cite{Cha02,Ott02} for a physics approach approach and references therein).

In \cite{SomBarecc16} the authors analyze first order synchronization schemes with time-dependent topologies where the internal dynamics may destabilize the alignment process. The work establishes sufficient conditions between the internal rule and the coupling scheme, in order for asymptotic synchronization to occur. To the best of our knowledge, the literature lacks a study of synchronization networks where the state of the network is correlated to the strength of the coupling as such is the case in networks of Cucker-Smale type. 

Prior works on the subject of flocking with collision avoidance in Cucker-Smale type networks include \cite{Cucker_2011,SomBarccc15}. The authors in \cite{Cucker_2011} study a collision-avoidance problem (model \eqref{eq: model2} of our work) on a framework based on the symmetry assumption of coupling weights. Their results, however, seize to hold when the couplings and the repelling functions are not symmetric. The asymmetry on the coupling terms has been highlighted in the literature as a very important feature in real-world biological and natural networks \cite{baler08}. The present work is an outgrowth of \cite{Sombarmed16} in various directions. The synchronization scheme is considered under milder assumptions and multi-dimensional internal dynamics. This gives rise to stability conditions that generalize the ones in \cite{Sombarmed16}. For the collision free model we provide a detailed proof of the convergence result. 
 

%

%

\section{Preliminaries}

\noindent {\it Notations:} For any $k\in \mathbb N$, $[k]$ is the set consisting of the first $k$ natural numbers, i.e. $[k]:=\{1,\dots,k\}$. By $n<\infty$ we understand the number of autonomous agents. The communication scheme is represented by a weighted graph $G=([n],E)$ where $[n]$ is the set of nodes and $E=\big\{w_{ij}: i,j \in [n]\big\}$ is the set of weighted edges. 
We denote the weighted degree of node $i$ as $d_i=\sum_{j}w_{ij}$. The dynamics evolve in $\mathbb R^r$ for some $r\geq 1$ that is endowed with the inner product $\langle \cdot , \cdot \rangle$ and the Euclidean norm $\|\cdot\|$. Each agent $i\in [n]$ is characterized by the state $(x_i,v_i)\in \mathbb R^r\times \mathbb R^r$. Clearly $x_i=(x_i^{(1)},\dots,x_i^{(r)})$ and $v_i=(v_i^{(1)},\dots,v_i^{(r)})$ stand for the position and velocity of $i$, respectively. Compact representation of the overall network state include $x=(x_1,\dots,x_n)$ and $v=(v_1,\dots,v_n)$ for elements $x$ and $v$ in the augmented space $\mathbb R^{n r}$. 
For $l\in [r]$, the spread of $y\in \mathbb R^{nr}$ in the $l$-th dimension is $$S_l (y)=\max_{i}y_i^{(l)}-\min_i y_i^{(l)}=\max_{i,j}|y_i^{(l)}-y_j^{(l)}|$$ and finally we denote $S(y)=\max_l S_l(y)$.  
Throughout this paper, we reserve the notation $``\cdot"$ for the classic derivative and $\frac{d}{dt}$ for the right-hand Dini derivative.

\vspace{0.2cm}
\noindent {\it The Contraction Coefficient:}  The problem of stability of general consensus networks is associated with the asymptotic behavior of products of non-negative matrices \cite{Seneta_2006}. The underlying mathematical concept used in the proofs of those results was first discussed in one of Markov's first papers \cite{Hartfiel}. This concept, known as the contraction coefficient (or coefficient of ergodicity), measures the worst case estimate on the averaging effect of non-negative matrices when they act on vectors. Markov demonstrated that for a $P=[p_{ij}]$ non-negative $n\times n$ matrix with constant row sums (i.e.  $\sum_{j}p_{ij}\equiv m$ for some $m > 0$), it holds that \begin{equation}\label{eq: cc} S(Pz)\leq \big(m-\min_{i,i'}\sum_{k}\{p_{ik},p_{i'k}\}\big) S(z), ~\forall ~z\in \mathbb R^n.\end{equation}
A proof of this result can be found in \cite{Hartfiel}. The contraction coefficient is the quantity $\big(m-\min_{i,i'}\sum_{k}\{p_{ik},p_{i'k}\}\big)$ and it provides an explicit estimate of the averaging property of $P$. While the framework is primarily compatible with discrete time dynamics, it can be adapted to a continuous time framework, \cite{Motsch_Tadmo_14,SomBarecc15,SomBarecc16}.  The contributions of this paper critically rely on auxiliary results (Lemma \ref{lem: difineq} and Lemma \ref{lem: difineq2}, below) the proof of which is an elaborate adaptation of the proof of the contraction estimate \eqref{eq: cc} \footnote{The intereted reader can directly observe the similarities between the contraction estimates derived in those two Lemma with \ref{eq: cc}}. These ideas provide us with with worst-case contraction estimates of the consensus term. Then combining ideas from \cite{Ha_Liu_09} and \cite{Cucker_2011} we are able to derive sufficient conditions for the desired asymptotic stability.

\section{Networks of Cucker-Smale Type with Deterministic Perturbations} \label{sect: problem}
In this section we present the dynamic algorithms with the accompanying hypotheses. The discussion is concluded with the main convergence results. 

Let $n$ be a number of agents and $i\in [n]$ with the state $(x_i,v_i)\in \mathbb R^{r}\times \mathbb R^{r}$. The models we consider in this work can be cast as the following system of equations:
\begin{equation}\label{eq: perturbcsflock}
\begin{split}
\dot x^{(l)}_i&=v^{(l)}_i\\
\dot v^{(l)}_i&=\sum_{j}\big(w_{ij}(t,x)+b^{(l)}_{ij}(t,x,v)\big)(v^{(l)}_{j}-v^{(l)}_i), t\geq t_0
\end{split}
\end{equation} 
for $l\in [r]$, $w_{ij}$ are the coupling terms and $b_{ij}:\mathbb R\times \mathbb R^{l}\times \mathbb R^{l}\rightarrow \mathbb R^{l}$ a non-trivial deterministic perturbation  between agents $j$ and $i$. This perturbation may, in general, depend on both time and state. We look at two meaningful scenarios of this general model, assuming corresponding special forms of $b_{ij}$. One form leads to flocking to a synchronization solution while the other form leads to collision-less velocity coordination.
%
%
%
\subsection{Heterogeneous Synchronization Using Diffusion Processes}\label{subsec:synch} The first type of perturbation considers a situation where agents, in addition to the consensus averaging, attain an individual way of flying that is dictated by an internal dynamical behavior. For this case we take \eqref{eq: perturbcsflock} with $$b_{ij}^{(l)}(t,x,v)=\frac{g^{(l)}(t,v_i)}{v_j^{(l)}-v_i^{(l)}}.$$ Then for given $t_0\in \mathbb R$ we are led to the initial value problem:  \begin{equation}\label{eq: model1}
\begin{split}
&\dot x_i=v_i \\
&\dot v_i=g(t,v_i)+\sum_{j}w_{ij}(t, x)(v_j-v_i),\hspace{0.1in} t\geq t_0\\
&x_i(t_0)=x_i^0, ~ v_i(t_0)=v_i^0\in \mathbb R^{r},~\hspace{0.1in}\text{given}.
\end{split}
\end{equation} 

An agent's velocity is affected by both the state of the other nodes and by an inherent dynamical process. 
The state of the other nodes affects the agent at a rate that depends on time and on the position vector $x$ It is, therefore, far from clear that the condition of Theorem \ref{thm: standardflocking} ensures convergence. Our objective is to reveal the interplay between the coupling forces of the consensus network, the initial configuration and the potential instability induced by the internal dynamics through a new stability condition. 
We proceed by stating assumptions on the acceptable behavior of the internal dynamic system and conclude with a condition on the coupling functions.

Let for $t\geq t_0$ the initial value problem \begin{equation}\label{eq: nominal}\dot z=g(t,z), \hspace{0.2in} z(t_0)=z^0\in \mathbb R^r \end{equation} together with its solution $z=z(t,t_0,z^0)$ defined in a maximal interval $[t_0,T)$. The hypothesis below aims to establish the well-posedness of $z$.
\begin{assum}\label{assum: f} The function $g(t,z)$ is defined in $\mathbb R\times V$, where $V$ an open subset of $\mathbb R^r$. It is continuous in $t\in \mathbb R$ and attains continuous first derivative in $z\in V$.
\end{assum}

We impose the following smoothness and bound conditions on the coupling weights. 
\begin{assum}\label{assum: coupling} The following statements hold for the functions $w_{ij}(t,x)$:


\noindent(1.) They are continuous functions of $t$ and continuously differentiable functions of $x$.

\noindent (2.) They satisfy \begin{equation*}\bar{w}\geq \sup_{t\geq t_0} \sup_{x}\max_{i\neq j}w_{ij}(t,x)\geq  \inf_{t}w_{ij}(t,x)\geq \psi(S(x))\end{equation*} for $\bar{w}<\infty$ and $\psi \geq 0$ an integrable and non-increasing function.
\end{assum} 
The rates $w_{ij}$ are uniformly bounded from above by $\bar{w}$ but not bounded from below as $\psi(\cdot)$ is allowed to vanish. This means that Assumption \ref{assum: coupling} allows the spatially decaying property. In addition, it includes symmetric couplings either as in \eqref{eq: csrates} or the ones proposed in \cite{Ha_Liu_09}. Clearly, it includes non-symmetric coupling rates such as those proposed in \cite{Motsch_Tadmo_14}. For the statement of the first result we are in need of some additional notation. Let
\begin{equation*}
K(t,l,y,w)=\int_{0}^{1}\frac{\partial }{\partial z^{(l)}}g^{(l)}(t,q y+(1-q)w)\,dq+\\
+\sum_{h\neq l}\bigg|\int_0^{1}\frac{\partial}{\partial z^{(h)}}g^{(l)}(t,q y+(1-q)w)\,dq\bigg|
\end{equation*} and for the maximal solution $\big(x(t),v(t)\big),~t\in [t_0,T)$ of \eqref{eq: model1}:
\begin{equation}\label{eq: K}
K=\sup_{t\in [t_0,T)}\max_{i,i'\in [n],l\in [r]} K(t,l,v_i(t),v_{i'}(t)).
\end{equation} 
This quantity represents the effect of the internal dynamic rule $g$ in the coupling process. Evaluated on the maximal solution, $K$, is the worst case estimate that we must take into account in order to derive initial conditions that compensate for the potential instability that $g$ will induce in the system that will in turn weaken the coupling rate $w_{ij}$. Observe also in $K(t,l,y,w)$ that the cross terms $\frac{\partial }{\partial z^(h)}g^{(l)}$ for $h\neq l$, are added in absolute value, as opposed to $\frac{\partial }{\partial z^(l)}g^{(l)}$. This discrepancy is the result of the dimensionality problem. Seeking synchronization of $v_i$ in all $r$ dimensions, one must take into account the effect that the internal dynamics $g$ have in all $r$ dimensions. Due to lack of structure on $g$, we have no choice but to regard the rate at which $g$ varies in different dimensions ($h\neq l$) as a purely negative perturbation against the synchronization along the $l^{th}$ dimensions.


\begin{thm}\label{thm: main1} Consider the initial value problem \eqref{eq: model1} with Assumptions \ref{assum: f} and \ref{assum: coupling} to hold  and its maximal solution $(x(t),v(t)),~t\in [t_0,T)$. Assume also that

\noindent{(1.)} there exists $d>0$ such that $$S(v^0)<\int_{S(x^0)}^{d}\big(n\psi(r)-K\big)\,dr,$$

\noindent{(2.)} There exist $\varepsilon>0$ such that $$n\psi(d^*)\geq K+\varepsilon $$ for $d^{*}>0:S(v^0)=\int_{S(x^0)}^{d^{*}}\big(n\psi(r)-K\big)\,dr$ and $K$ as in \eqref{eq: K}. Then, $(x,v)$ satisfies $$S(v(t))\leq e^{-\varepsilon t}S(v^0) \hspace{0.1in}\text{\&} \hspace{0.1in}\sup_{t\geq t_0}S(x(t))<d^*,~\forall~t\in [t_0,T).$$
\end{thm}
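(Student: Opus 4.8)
The plan is to track the velocity spread $S(v(t))$ along the maximal solution and show it obeys a differential inequality of the form $\frac{d}{dt}S(v) \le \big(K - n\psi(S(x))\big)S(v)$ for as long as $S(x(t)) < d$, while simultaneously controlling $S(x(t))$ via $\frac{d}{dt}S(x) \le S(v)$. The core technical ingredient is the contraction estimate of the consensus term: fixing a coordinate $l$, pick indices $M,m$ achieving $v_M^{(l)} = \max_i v_i^{(l)}$ and $v_m^{(l)} = \min_i v_i^{(l)}$ at a given time (using the Dini derivative to handle non-smoothness of the max/min). Then $\frac{d}{dt}\big(v_M^{(l)} - v_m^{(l)}\big)$ splits into the consensus part $\sum_j w_{Mj}(v_j^{(l)} - v_M^{(l)}) - \sum_j w_{mj}(v_j^{(l)} - v_m^{(l)})$ and the internal-dynamics part $g^{(l)}(t,v_M) - g^{(l)}(t,v_m)$. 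For the consensus part, one applies the Markov-type argument behind \eqref{eq: cc} — this is presumably exactly what Lemma \ref{lem: difineq} delivers — yielding a bound of the form $-\big(\text{something} \ge n\psi(S(x))\big)\,S_l(v)$, where the lower bound on the weights $w_{ij} \ge \psi(S(x))$ from Assumption \ref{assum: coupling} and the fact that there are $n$ agents produce the factor $n\psi(S(x))$.

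Next I would handle the perturbation term $g^{(l)}(t,v_M) - g^{(l)}(t,v_m)$ by writing it as an integral along the segment joining $v_m$ and $v_M$: $g^{(l)}(t,v_M)-g^{(l)}(t,v_m) = \sum_h \big(v_M^{(h)}-v_m^{(h)}\big)\int_0^1 \frac{\partial}{\partial z^{(h)}} g^{(l)}\big(t, q v_M + (1-q) v_m\big)\,dq$, which is licit by Assumption \ref{assum: f}. Separating $h=l$ from $h\neq l$ and bounding $|v_M^{(h)}-v_m^{(h)}| \le S(v)$ gives $g^{(l)}(t,v_M)-g^{(l)}(t,v_m) \le K(t,l,v_M,v_m)\,S(v) \le K\,S(v)$ with $K$ as in \eqref{eq: K} — note the $h\neq l$ terms enter in absolute value precisely because the sign of $v_M^{(h)}-v_m^{(h)}$ is uncontrolled, which explains the asymmetric definition of $K(t,l,y,w)$. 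Maximizing over $l$ then yields $\frac{d}{dt}S(v) \le \big(K - n\psi(S(x))\big)S(v)$ on the interval where this holds.

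Now comes the continuation/barrier argument. Define $T^* = \sup\{\tau \in [t_0,T) : S(x(t)) < d \text{ on } [t_0,\tau]\}$; this is positive since $S(x^0) < d^* < d$. On $[t_0,T^*)$, combine the two differential inequalities: since $\psi$ is non-increasing, $n\psi(S(x(t))) \ge n\psi(d)$... but more sharply, following the Ha–Liu device, consider the Lyapunov-type functional that compares $S(v)$ against the antiderivative of $n\psi - K$. Concretely, let $\Phi(s) = \int_{S(x^0)}^s (n\psi(r)-K)\,dr$; then hypothesis (1.) reads $S(v^0) < \Phi(d)$, and one shows $S(v(t)) + \Phi(S(x^0)) - \int_{S(x^0)}^{S(x(t))}(n\psi(r)-K)\,dr$ — equivalently $S(v(t)) - \Phi(S(x(t)))$ — is non-increasing in $t$ wherever $n\psi(S(x)) \ge K$ (using $\frac{d}{dt}S(x) \le S(v)$ so that $\frac{d}{dt}\big[S(v) - \Phi(S(x))\big] \le (K-n\psi(S(x)))S(v) - (n\psi(S(x))-K)S(v) = \ldots$ — the exact bookkeeping needs the sign of $S(v)$, which is nonnegative). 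This forces $\Phi(S(x(t))) \le \Phi(S(x(t))) + S(v(t)) \le S(v^0) + \Phi(S(x^0)) = S(v^0) < \Phi(d^*)$ wait — more carefully, it forces $S(x(t))$ to stay strictly below $d^*$, because if $S(x(t))$ ever reached $d^*$ we would get $S(v^0) \ge \Phi(d^*) = S(v^0)$, a contradiction with strictness; hence $T^* = T$ and $\sup_{t} S(x(t)) < d^*$. Finally, on all of $[t_0,T)$ we have $S(x(t)) \le d^*$, so by hypothesis (2.) $n\psi(S(x(t))) \ge n\psi(d^*) \ge K + \varepsilon$, whence $\frac{d}{dt}S(v) \le -\varepsilon S(v)$ and Grönwall gives $S(v(t)) \le e^{-\varepsilon(t-t_0)}S(v^0)$.

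The main obstacle I anticipate is making the contraction estimate for the consensus term fully rigorous in continuous time with the right constant $n\psi(S(x))$ — this is where the non-smooth Dini-derivative analysis of the max and min over agents, combined with the Markov contraction coefficient argument, must be carried out carefully (and is presumably the content of Lemma \ref{lem: difineq}); a secondary subtlety is ensuring the functional $S(v) - \Phi(S(x))$ is genuinely monotone given that $S(v)$ and $S(x)$ are only Dini-differentiable and the index pairs achieving the extrema can switch, but this is standard once the comparison lemma is in place.
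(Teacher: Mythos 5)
Your strategy coincides with the paper's: the differential inequality $\frac{d}{dt}S(v)\le\big(K-n\psi(S(x))\big)S(v)$ is exactly Lemma \ref{lem: difineq}, and your sketch of how it is obtained --- the Markov-type contraction estimate for the consensus part together with the integral representation of $g^{(l)}(t,v_M)-g^{(l)}(t,v_m)$, keeping the diagonal partial signed and the off-diagonal ones in absolute value --- matches the paper's Appendix argument and the definition of $K(t,l,y,w)$; the endgame (Lyapunov functional, continuation, Gr\"onwall) is also the paper's.

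The one step that does not survive scrutiny as written is the sign in your functional. The paper uses $\mathcal V=S(v)+\int_0^{S(x)}\big(n\psi(r)-K\big)\,dr$, i.e.\ $S(v)+\Phi(S(x))$ up to an additive constant, not $S(v)-\Phi(S(x))$. With your minus sign, the bound $-\big(n\psi(S(x))-K\big)\frac{d}{dt}S(x)\le-\big(n\psi(S(x))-K\big)S(v)$ would require $\frac{d}{dt}S(x)\ge S(v)$, whereas you only have $\frac{d}{dt}S(x)\le S(v)$; multiplying by the nonpositive factor $-(n\psi-K)$ reverses the inequality. (One can still show $S(v)-\Phi(S(x))$ is non-increasing via $\frac{d}{dt}S(x)\ge -S(v)$, but that only bounds $\Phi(S(x(t)))$ from \emph{below}, which is useless for trapping $S(x)$.) With the plus sign everything closes: $\big(n\psi-K\big)\frac{d}{dt}S(x)\le\big(n\psi-K\big)S(v)$ cancels against the Lemma bound, and the resulting monotonicity gives $S(v(t))+\Phi(S(x(t)))\le S(v^0)+\Phi(S(x^0))=\Phi(d^*)$ --- which is in fact the inequality you silently switch to in your concluding sentence --- hence $\int_{S(x(t))}^{d^*}\big(n\psi(r)-K\big)\,dr\ge 0$ and $S(x(t))\le d^*$. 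Note also that $S(v^0)=\Phi(d^*)$ is an equality by the definition of $d^*$, so the ``contradiction with strictness'' phrasing should be replaced by this last monotonicity/continuity argument; once $S(x(t))\le d^*$ is in hand, your hypothesis-(2.) plus Gr\"onwall step is exactly the paper's.
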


The proof of the Theorem relies on the following result, the proof of which is put in the Appendix:
\begin{lem}\label{lem: difineq}
Let $(x, v)$ be the maximal solution of \eqref{eq: model1}, defined in $[t_0,T)$. Then
\begin{equation}\label{eq: difineq}
\begin{split}
\frac{d}{dt}S\big(v(t)\big)\leq \big[K-n\psi\big(S(x(t))\big)\big]S\big(v(t)\big),
\end{split}
\end{equation} for $t\in [t_0,T)$.
\end{lem}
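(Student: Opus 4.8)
\emph{Proof plan.} The plan is to fix an arbitrary $t\in[t_0,T)$ and bound the right-hand Dini derivative of $S\big(v(\cdot)\big)$ there. Observe that
$$
S\big(v(t)\big)=\max_{l\in[r]}\ \max_{i,j\in[n]}\big(v^{(l)}_i(t)-v^{(l)}_j(t)\big),
$$
a pointwise maximum of finitely many functions that are $C^1$ in $t$: the maximal solution $v$ is continuously differentiable because the right-hand side of \eqref{eq: model1} is continuous in $(t,x,v)$ under Assumptions \ref{assum: f} and \ref{assum: coupling}. By the standard rule for the Dini derivative of a finite maximum of $C^1$ functions, $\frac{d}{dt}S\big(v(t)\big)$ equals the maximum of $\dot v^{(l)}_i(t)-\dot v^{(l)}_j(t)$ over the triples $(i,j,l)$ that are \emph{active} at $t$, i.e.\ those for which $l$ realizes $S(v(t))$, $v^{(l)}_i(t)=\max_k v^{(l)}_k(t)$, and $v^{(l)}_j(t)=\min_k v^{(l)}_k(t)$. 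Hence it suffices to prove $\dot v^{(l)}_M(t)-\dot v^{(l)}_m(t)\le\big[K-n\psi(S(x(t)))\big]S(v(t))$ for every active triple, where I write $M$ for the maximizing index, $m$ for the minimizing index, and $l$ for the active coordinate.

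Subtracting the two corresponding copies of the velocity equation in \eqref{eq: model1}, I split $\dot v^{(l)}_M-\dot v^{(l)}_m=(\mathrm I)+(\mathrm{II})$ with internal part $(\mathrm I):=g^{(l)}(t,v_M)-g^{(l)}(t,v_m)$ and consensus part $(\mathrm{II}):=\sum_j w_{Mj}(v^{(l)}_j-v^{(l)}_M)-\sum_j w_{mj}(v^{(l)}_j-v^{(l)}_m)$. For $(\mathrm I)$, I would write $g^{(l)}(t,v_M)-g^{(l)}(t,v_m)=\int_0^1\frac{d}{dq}g^{(l)}\big(t,qv_M+(1-q)v_m\big)\,dq$ and expand via the chain rule; isolating the coordinate $h=l$, whose increment $v^{(l)}_M-v^{(l)}_m$ equals $S(v(t))\ge0$, from the coordinates $h\ne l$, whose increments obey $|v^{(h)}_M-v^{(h)}_m|\le S(v(t))$, yields the upper bound $K(t,l,v_M,v_m)\,S(v(t))$, which by the definition \eqref{eq: K} is at most $K\,S(v(t))$.

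The crux is the estimate $(\mathrm{II})\le-n\psi(S(x(t)))\,S(v(t))$, the continuous-time counterpart of the contraction estimate \eqref{eq: cc}. Since $M$ maximizes and $m$ minimizes the $l$-th coordinate, every summand satisfies $v^{(l)}_j-v^{(l)}_M\le0$ and $v^{(l)}_j-v^{(l)}_m\ge0$; combining this with the lower bound $w_{Mj},w_{mj}\ge\psi(S(x(t)))$ from Assumption \ref{assum: coupling}(2) lets $\psi(S(x(t)))$ be factored out of both sums in the favorable direction, so that $(\mathrm{II})\le\psi(S(x(t)))\sum_j\big[(v^{(l)}_j-v^{(l)}_M)-(v^{(l)}_j-v^{(l)}_m)\big]=n\,\psi(S(x(t)))\,(v^{(l)}_m-v^{(l)}_M)=-n\psi(S(x(t)))\,S(v(t))$, the $\sum_j v^{(l)}_j$ contributions telescoping away. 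Adding the bounds on $(\mathrm I)$ and $(\mathrm{II})$ gives the required inequality for each active triple, and taking the maximum over active triples then yields \eqref{eq: difineq}. The two points I expect to need care are the Dini-derivative-of-a-maximum identity (valid here because the individual scalar functions $v^{(l)}_i(\cdot)-v^{(l)}_j(\cdot)$ are $C^1$) and the fact that the two \emph{distinct} weight families $w_{Mj}$ and $w_{mj}$ can be bounded simultaneously in $(\mathrm{II})$ — which works precisely because each increment appearing there carries a definite sign.
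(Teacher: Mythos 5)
Your proof is correct, but it takes a genuinely different route from the paper's. The paper fixes an \emph{arbitrary} pair $(i,i')$, introduces the damping term $m=\bar n\bar w+K$, rewrites the difference of the two velocity equations as a constant-row-sum combination $\sum_j a_j v_j^{(l)}$ with $\sum_j a_j=0$, splits the coefficients by sign, and uses the identity $|x-y|=x+y-2\min\{x,y\}$ to surface the ergodicity coefficient $\sum_k\min\{w_{ik},w_{i'k}\}$ --- i.e.\ the adaptation of the Markov contraction estimate \eqref{eq: cc} announced in the Preliminaries --- before specializing to the maximizing pair at the very end. You instead work only with the active (extremal) triple from the start, where the increments $v_j^{(l)}-v_M^{(l)}\le 0$ and $v_j^{(l)}-v_m^{(l)}\ge 0$ carry definite signs, so the uniform lower bound $\psi(S(x))$ can be pulled out of each sum separately and the $\sum_j v_j^{(l)}$ contributions telescope. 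Your treatment of the internal term via the line integral of $Dg$ along the segment, keeping the diagonal entry signed and the off-diagonal entries in absolute value, reproduces $K(t,l,v_M,v_m)$ exactly as defined, so that piece coincides with the paper's. What your route buys is economy and a cleaner constant: the extremality argument delivers the factor $n\psi$ directly, whereas the paper's sum over $k\neq i,i'$ literally yields $(n-2)\psi$ and silently rounds up to $n\psi$. What the paper's route buys is reusability: the same contraction-coefficient template bounds $\frac{d}{dt}|v_i^{(l)}-v_{i'}^{(l)}|$ for \emph{arbitrary} pairs, which is exactly what Lemma \ref{lem: difineq2} needs for the collision-avoidance model (where the closest pair $(h,h')$ is generally not the velocity-extremal pair); your sign argument is confined to the extremal pair and would not extend there. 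Your two flagged points of care --- the Dini derivative of a finite max of $C^1$ functions and the simultaneous bounding of the two weight families --- are both handled correctly.
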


\begin{proof}[Proof of Theorem \ref{thm: main1}] Consider the functional \begin{equation}
\begin{split}
\mathcal V( x, v)=S(v)+\int_{0}^{S( x)}\big(n\psi(r)-K\big)\,dr
\end{split}
\end{equation} and evaluate it at the solution $\big( x(t), v(t)\big), t\in[t_0,T)$ with $\mathcal V(t)=\mathcal V\big( x(t), v(t)\big)$. From (1.) there exists $t_1>t_0$ such that for $t\in [t_0,t_1)$ \begin{equation*}\begin{split}&\frac{d}{dt}\mathcal V(t)\leq \frac{d}{dt}S(v(t))+\big[n\psi\big(S(x(t))-K\big]S\big(v(t)\big)\leq 0
\end{split}\end{equation*} in view of Lemma \ref{lem: difineq} and $$\frac{d}{dt}S(x(t))\leq S(v(t)).$$
The last claim is justified as follows: Note that $S(x(t))=|x_i^{(l)}(t)-x_j^{(l)}(t)|$, for some $i,j\in [n]$ and $l\in [r]$ (possibly dependent on $t$). Then
\begin{equation*}\begin{split}
\frac{d}{dt}S(x(t))&=\frac{d}{dt}|x_i^{(l)}(t)-x_j^{(l)}(t)|\leq \bigg|\frac{d}{dt}\big(x_i^{(l)}(t)-x_j^{(l)}(t)\big)\bigg|\\
&= |v_i^{(l)}(t)-v_j^{(l)}(t)|\\
&\leq S(v(t))).
\end{split}
\end{equation*}
Consequently, $\mathcal V(t)\leq \mathcal V(t_0)$ for $t<t_1$, equivalent to
 \begin{equation*}
S( v(t))+\int_0^{S( x(t))}(n\psi(r)-K)\,dr\leq S(v^0)+\int_{0}^{S(x^0)}\big(n\psi(r)-K\big)\,dr
\end{equation*} and obviously $$ \int_0^{S(x(t))}(n\psi(r)-K)\,dr\leq S(v^0)+\int_{0}^{S(x^0)}\big(n\psi(r)-K\big)\,dr. $$ Condition $(1.)$ also implies the existence of $d^*<d$ as in condition $(2.)$ the inequality above yields
$$\int_0^{S(x(t))}(n\psi(r)-K)\,dr\leq \int_0^{d^*}(n\psi(r)-K)\,dr$$ so \begin{equation*}
\int_{S(x(t))}^{d^*}(n\psi(r)-K)\,dr\geq  0
\end{equation*} The last inequality implies that $S(x(t))\leq d^*$ for $t<t_1$ and $n\psi(d^*)-K\geq \varepsilon>0$. Since no assumption was taken on $t_1$, the monotonicity of $\psi$ yields that we can take $t_1=T$ proving the second claim of the theorem. The differential inequality \eqref{eq: difineq} then yields the first claim, concluding the proof.
\end{proof} This result establishes the connection between the internal and the position-dependent coupling dynamics. The power of Theorem \ref{thm: main1} can be further extracted if we assume that the solution $v$ is a-priori trapped within a region that possibly depends on the initial conditions. In such case, $T=\infty$ and the imposed conditions can be checked more easily. The following result asserts that for a particular type of compact subsets $\mathbb R^{r}$ into which $z$ of \eqref{eq: nominal} remains trapped, implies that $v$ in \eqref{eq: model1} behaves likewise. 
%
%
\begin{thm}\label{cor: main1} 
Assume that $U$ be a compact, convex $g$-invariant subset of $\mathbb R^{r}$. Then  $v^0_i\in U$ for $i\in [n]$, guarantees that the solution $(x,v)$ of  \eqref{eq: model1} exists for all times. In addition, the results of Theorem \ref{thm: main1} hold true with $K$ as in \eqref{eq: K} to be substituted by $$K=\sup_{t\geq t_0}\max_{y,w\in U,l\in [r]}K(t,l,y,w).$$
\end{thm}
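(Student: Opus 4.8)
The plan is to prove the statement in two steps: (i) show that the set $U$ is forward invariant for every velocity, i.e. $v_i(t)\in U$ for all $i\in[n]$ and every $t$ in the maximal interval $[t_0,T)$, which then forces $T=\infty$; and (ii) use $v_i(t)\in U$ together with the convexity of $U$ to dominate the a-posteriori constant $K$ of \eqref{eq: K} by the a-priori constant $\sup_{t\ge t_0}\max_{y,w\in U,l\in[r]}K(t,l,y,w)$, after which the proof of Theorem \ref{thm: main1} applies verbatim.

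For step (i) I would argue with the distance-to-$U$ function. Since $U$ is closed and convex, $\mathrm{dist}(\cdot,U)$ is $1$-Lipschitz and, at any $v\notin U$, differentiable with gradient the outward unit normal $\nu(v)=(v-\pi_U v)/\|v-\pi_U v\|$ at the (unique) nearest point $\pi_U v\in\partial U$, and the variational inequality for the projection gives $\langle\nu(v),u-\pi_U v\rangle\le 0$ for all $u\in U$. Put $\phi(t)=\max_i\mathrm{dist}(v_i(t),U)$, a Lipschitz function. Wherever $\phi(t)>0$, a maximising index $i$ has $v_i(t)\notin U$ and $\frac{d}{dt}\mathrm{dist}(v_i(t),U)=\langle\nu(v_i),g(t,v_i)\rangle+\sum_j w_{ij}(t,x)\langle\nu(v_i),v_j-v_i\rangle$. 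The consensus term is $\le 0$: writing $v_j-v_i=(v_j-\pi_U v_j)+(\pi_U v_j-\pi_U v_i)+(\pi_U v_i-v_i)$ and applying Cauchy--Schwarz, the supporting-hyperplane inequality at $\pi_U v_i$, and $\mathrm{dist}(v_i,U)=\phi(t)\ge\mathrm{dist}(v_j,U)$, one gets $\langle\nu(v_i),v_j-v_i\rangle\le\mathrm{dist}(v_j,U)-\phi(t)\le 0$, and $w_{ij}\ge 0$. The internal term is $\le L\,\mathrm{dist}(v_i,U)=L\phi(t)$, because $\langle\nu(v_i),g(t,\pi_U v_i)\rangle\le 0$ by the ($g$-invariance $\Rightarrow$ subtangency) half of Nagumo's criterion at the boundary point $\pi_U v_i$, while $\langle\nu(v_i),g(t,v_i)-g(t,\pi_U v_i)\rangle\le\|g(t,v_i)-g(t,\pi_U v_i)\|\le L\,\mathrm{dist}(v_i,U)$, where $L$ is a Lipschitz constant for $g(t,\cdot)$ on a neighbourhood of $U$, uniform over compact $t$-intervals (Assumption \ref{assum: f}). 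Hence $\frac{d}{dt}\phi(t)\le L\phi(t)$ wherever $\phi>0$; since $\phi$ is continuous with $\phi(t_0)=0$ (as $v_i^0\in U$), a first-crossing-time/Gronwall comparison forces $\phi\equiv 0$, that is $v_i(t)\in U$ on $[t_0,T)$.

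Once $v_i(t)\in U$ is known, compactness of $U$ makes $v(t)$ bounded, hence $x_i(t)=x_i^0+\int_{t_0}^t v_i(s)\,ds$ grows at most linearly, so $(x(t),v(t))$ stays in a compact set on every bounded time window; the right-hand side of \eqref{eq: model1} being continuous there (note $U\subseteq V$), the solution cannot escape in finite time and $T=\infty$. For step (ii), for all $t\ge t_0$, $i,i'\in[n]$, $l\in[r]$ and $q\in[0,1]$ convexity gives $q v_i(t)+(1-q)v_{i'}(t)\in U$, so $K(t,l,v_i(t),v_{i'}(t))\le\max_{y,w\in U}K(t,l,y,w)$ and the $K$ of \eqref{eq: K} is bounded above by the new $K=\sup_{t\ge t_0}\max_{y,w\in U,l\in[r]}K(t,l,y,w)$. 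As $S(v)\ge 0$, Lemma \ref{lem: difineq} still holds with this larger $K$, and then the proof of Theorem \ref{thm: main1} --- the functional $\mathcal V$, the bound $\frac{d}{dt}S(x)\le S(v)$, the extraction of $d^*$, and the concluding exponential estimate --- goes through unchanged, with ``$t\in[t_0,T)$'' now read as ``$t\ge t_0$''.

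I expect the main obstacle to be step (i), and inside it the boundary bookkeeping: turning the hypothesis ``$U$ is $g$-invariant'' into the pointwise subtangency $\langle\nu,g(t,p)\rangle\le 0$ at $p\in\partial U$ (the easy direction of Nagumo, provable by a first-order Taylor contradiction since only the strict case is needed), and combining it cleanly with the consensus forcing --- which has to be shown to contract toward $U$ rather than push away, which is precisely where convexity of $U$ and $w_{ij}\ge 0$ enter. Everything downstream of ``$v_i(t)\in U$'' is routine on top of Theorem \ref{thm: main1}. One caveat to record: the new $K$ must be finite for hypotheses (1.)--(2.) to be non-vacuous; this is automatic when $g$ is autonomous (continuity of its partials on the compact set $U$) and is otherwise an implicit uniform-in-$t$ condition on $g$.
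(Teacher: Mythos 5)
Your proposal is correct and reaches the same two conclusions the paper needs --- forward invariance of $U$ for the velocities (hence $T=\infty$), and domination of the a-posteriori $K$ of \eqref{eq: K} by $\sup_{t\ge t_0}\max_{y,w\in U,l\in[r]}K(t,l,y,w)$ --- but the invariance step is argued by a genuinely different route. The paper proceeds by an explicit Euler discretization: it writes $v_i(t+l)=v_i(t)+l\big[g(t,v_i)+\sum_j w_{ij}(v_j-v_i)\big]$, observes that the pure drift step $v_i^0+k\,g(t_0,v_i^0)$ stays in $U$ for small $k$ by $g$-invariance, that the pure consensus step is a convex combination of the $v_j^0$ and hence lies in $U$ for small $k$ by convexity of $U$ and $w_{ij}\ge 0$, and then takes the midpoint of these two points (convexity again) to recover the full Euler step with step $k/2$. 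You instead run the continuous-time version of the same idea through $\phi(t)=\max_i\mathrm{dist}(v_i(t),U)$: the variational inequality for the projection kills the consensus term at a maximizing index, the subtangency consequence of $g$-invariance (the easy direction of Nagumo) kills the drift at the projected boundary point, and a Gr\"onwall/first-crossing argument closes the loop. The structural inputs are identical in both proofs (convexity of $U$ for the coupling, $g$-invariance for the drift, nonnegativity of $w_{ij}$), but yours is the more rigorous rendering: the paper's claim that $v_i(t+l)\in U$ for small $l$ does not by itself yield invariance of the continuous flow without a limiting argument, which is exactly what your distance-function estimate supplies; the price is the extra regularity bookkeeping (a local Lipschitz constant for $g(t,\cdot)$ uniform on compact time windows, Dini derivatives of a max), which you handle correctly. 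Your closing caveat that the new $K$ must be finite for conditions (1.)--(2.) to be checkable is a fair point the paper leaves implicit. Everything downstream of $v_i(t)\in U$ --- the trivial bound on $K$, rerunning Lemma \ref{lem: difineq} and the proof of Theorem \ref{thm: main1} --- coincides with the paper.
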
 
\begin{proof}
Consider the maximal solution $(x(t),v(t))$, $t\in [t_0,T)$ of \eqref{eq: model1}. 
It suffices to show that $v_i(t_0)=v_i^0\in U$ implies $v_i(t)\in U$ for all $t\in [t_0,T)$ and $i\in [n]$. Then $T$ can be extended to $\infty$, establishing the solution $(x,v)$ in the large. We can discretize the second part of \eqref{eq: model1} as follows 
\begin{equation}\label{eq: conv1}
v_i(t+l)=v_i(t)+l\bigg[g(t,v_i(t))+\sum_j w_{ij}(t)\big(v_j(t)-v_i(t)\big)\bigg]
\end{equation} where $w_{ij}(t)=w_{ij}(t,x(t)),~t\in [t_0,T)$ and $i\in [n]$. The result is then proved if we can show that $v_i(t+l)\in U$ for $l$ small and arbitrary $t\in [t_0,T)$. The collection of the initial values $\{v_i^0\}$ lies in $U$, and the convexity of $U$ implies that the convex hull of $\{v_1^0,\dots,v_n^0\}$ is also a subset of $U$. Pick $i\in [n]$ and let's elaborate on $v_i(t+l) $ in Eq. \eqref{eq: conv1}, at $t=t_0$. By the $g$-invariance property of $U$, one can discretize the uncoupled equation and conclude that there is $k^{'}>0$ such that 
$$v_i^{a}(t_0+k):=v_i^0+k g(t_0,v_i^0)\in U$$
for any $k\in [0,k^{'})$. In addition, if on ignores the internal dynamics (i.e. $g\equiv 0$), Assumption \ref{assum: coupling} on the weights, implies that there is $k^{''}$ such that 
$$ v_i^{b}(t_0+k):=v_i^0+k\sum_{j}w_{ij}(t_0)(v_j^0-v_i^0) \in U$$
for $k\in [0,k^{''})$. The latter claim is because the right hand side of the equation is a convex sum of $\{v_i^0\}$, thus places $v_i^{b}(t_0+k)$ exactly in the convex hull of $\{v_1^0,\dots,v_n^0\}$, which in turn is in $U$. We showed that both $v_i^{a}(t_0+k)$ and $ v_i^{b}(t_0+k)$ are points in $U$ for small enough $k\in [0,\min\{k^{'},k^{''}\})$. Then by convexity of $U$, 
\begin{equation*}
\begin{split}
U \ni \frac{1}{2}v_i^{a}(t_0+k) + \frac{1}{2}v_i^{b}(t_0+k)&=\\ =v_i^0+\frac{k}{2}\bigg[ g(t_0,v_i^0)&+\sum_k w_{ij}(t_0)(v_j^0-v_i^0)\bigg] 
\end{split}
\end{equation*} Observe that the right hand-side is precisely this of \eqref{eq: conv1} with $l=k/2$ and $t=t_0$. We remark that in view of Assumptions \ref{assum: f} and \ref{assum: coupling}, $k^{'}$ and $k^{''}$ can be chosen independent of $t_0$ or $i$ so that the argumentation remains valid for any other $t\in [t_0,T)$ and $i\in [n]$.
\end{proof}

The applicability of this result includes, but not limited to, problems of synchronization of non-linear or chaotic oscillations where initial configuration near the synchronization manifold ensure boundedness of the solution of the network (see for example \cite{Ott02,Cha02,gonzalez2004synchronization}). 

\begin{rem}\label{rem: connectivity}
In Theorems \ref{thm: main1} and \ref{cor: main1} we assumed that every agent has access to the states of the rest of the agents. This is an effectively all-to-all communication and it may be thought of as too demanding. The invoked argument allows for a small relaxation of the all-to-all sharing: We can ask for every two agents that do not communicate, the existence of a third agent with which both of them must communicate. In this case, the estimates of Theorem \ref{thm: main1} are valid if $n\psi(r)$ in the initial setup conditions is substituted by $\psi(r)$.
\end{rem}

\subsection{Flocking with Guaranteed Collision Avoidance}\label{subsec:colli}
The second dynamic model we will study rolls back to the classic consensus problem and convergence to a common constant value.  
 Here the alignment should be achieved  with agents positioning themselves in at least a minimum distance from each other. For this we will study a special case of \eqref{eq: perturbcsflock} with $$b_{ij}^{(l)}(t,x,v)=-\frac{1}{S(v)}f_{ij}(\|x_{i}-x_j\|^2)\langle x_{i}-x_{j},v_{i}-v_{j}\rangle , ~ l\in [r].$$
The protocol we propose is \begin{equation}\label{eq: model2}
\begin{split}
&\dot x_i=v_i \\
&\dot v_i=\sum_{j}\bigg(w_{ij}(t,x)-\frac{f_{ij}(\|x_{i}-x_j\|^2)\langle x_{i}-x_{j},v_{i}-v_{j}\rangle}{S(v)}\bigg)(v_j-v_i)\\
&x_i(t_0)=x_i^0, ~ v_i(t_0)=v_i^0\in \mathbb R^{r},~\hspace{0.1in}\text{given}.
\end{split}
\end{equation} where $w_{ij}$ as in Assumption \ref{assum: coupling} and the additional terms that model the collision prevention mechanism. 
The functions $f_{ij}$ are repelling forces that can be appropriately constructed so as to keep the agents at a prescribed relative distance. 
\begin{assum}\label{assum: rep}
For any $i\neq j$, $f_{ij}(\cdot)$ is a continuous non-negative function defined in $(d_0,\infty)$, for some constant $d_0>0$ such that 
$$\int_{d_0}^{d_1}f_{ij}(r)\,dr=+\infty~\text{and}~ \int_{d_1}^{+\infty}f_{ij}(r)\,dr<+\infty,$$
for all $d_1>d_0$.
\end{assum}

A simple example of symmetric repelling function (also to be used in \S \ref{sect: simulation}) is $f(r)=(r-d_0)^{-\varepsilon}$ for any fixed $\varepsilon>1$. More examples can be found in \cite{Cucker_2011}. The objective of this section is the derivation of sufficient conditions for flocking of \eqref{assum: rep}. One should expect a formula that connects the coupling strength with the functions $f_{ij}$. 

\begin{thm}\label{thm: main2}
Consider the initial value problem \eqref{eq: model2} with Assumptions \ref{assum: coupling} and \ref{assum: rep} to hold, and its maximal solution $(x(t),v(t))$ for $t\in [t_0,T)$. If $i\neq j$ implies $\|x_i^0-x_j^0\|>d_0$ and \begin{equation}\label{eq: initialcolav}
\frac{S(v^0)}{n}<\frac{1}{2}\int_{S(x^0)}^{\infty}\psi(r)\,dr - \max_{i\neq j}\int_{\|x_i^0-x_j^0\|^2}^{\infty}f_{ij}(r)\,dr.
\end{equation}
Then:
\begin{enumerate}
\item $T=\infty$,
\item $\|x_i(t)-x_j(t)\|>d_0$, for all $t\geq t_0$,
\item the solution satisfies $$S(v(t))\rightarrow 0~\text{as}~t\rightarrow \infty\hspace{0.1in}\text{and} \hspace{0.1in}\sup_{t\geq t_0} S(x(t))<\infty.$$
\end{enumerate}
\end{thm}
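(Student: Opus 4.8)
The plan is to follow the architecture of the proof of Theorem~\ref{thm: main1}: convert the vector field into a scalar differential inequality for the velocity spread $S(v)$, and then run that inequality against a single barrier functional that simultaneously controls the alignment, the spatial spread, and the minimal inter-agent distance. The scalar inequality is the role of the auxiliary Lemma~\ref{lem: difineq2}, which I expect to read, on $[t_0,T)$,
\[
\frac{d}{dt}S(v(t)) \;\le\; -\,n\,\psi\big(S(x(t))\big)\,S(v(t)) \;+\; \mathcal R(t),
\]
where the first term is the genuine consensus contraction produced by the $w_{ij}$ (a continuous-time contraction-coefficient estimate of the type \eqref{eq: cc}, exactly as Lemma~\ref{lem: difineq} underlies Theorem~\ref{thm: main1}), and $\mathcal R(t)$ collects the repelling correction. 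The reason the correction in \eqref{eq: model2} is written divided by $S(v)$ and multiplied by $(v_j-v_i)$ is precisely so that, when one estimates the contribution of that term to $\frac{d}{dt}S(v)=\dot v_i^{(l)}-\dot v_{i'}^{(l)}$ at the extremal indices, the bound $|v_j^{(l)}-v_i^{(l)}|\le S(v)$ cancels the denominator, and what remains --- after Cauchy--Schwarz $|\langle x_i-x_j,v_i-v_j\rangle|\le\|x_i-x_j\|\,\|v_i-v_j\|$ and the identity $2\langle x_i-x_j,v_i-v_j\rangle=\frac{d}{dt}\|x_i-x_j\|^2$ --- is a purely position-dependent quantity controlled by the rate at which the repelling potentials $\int_{\|x_i-x_j\|^2}^{\infty}f_{ij}$ decrease.

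With this in hand, introduce
\[
\mathcal F(t)=\frac{S(v(t))}{n}+\max_{i\neq j}\int_{\|x_i(t)-x_j(t)\|^2}^{\infty}f_{ij}(r)\,dr+\frac12\int_{0}^{S(x(t))}\psi(r)\,dr,
\]
and show $\frac{d}{dt}\mathcal F(t)\le -\tfrac12\psi\big(S(x(t))\big)S(v(t))\le 0$ for $t\in[t_0,T)$. The ingredients are: Lemma~\ref{lem: difineq2}; the elementary bound $\frac{d}{dt}S(x(t))\le S(v(t))$ (proved exactly as in Theorem~\ref{thm: main1}), so that $\frac{d}{dt}\big(\tfrac12\int_0^{S(x)}\psi\big)\le\tfrac12\psi(S(x))S(v)$ cancels half of the $-\psi(S(x))S(v)$ coming from $\frac{d}{dt}(S(v)/n)$; and a Danskin/envelope identity for the Dini derivative of $\max_{i\neq j}(\cdot)$ (it equals the largest of the derivatives over the active index pairs), which lets the residual $\mathcal R(t)/n$ be absorbed by $\frac{d}{dt}\max_{i\neq j}\int_{\|x_i-x_j\|^2}^{\infty}f_{ij}$. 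The sign is favourable: when the dominant repelling potential grows the corresponding pair is approaching, so the correction term in \eqref{eq: model2} carries a positive coefficient and only strengthens the contraction of $S(v)$; when it shrinks the contraction may be weakened, but by no more than the potential's decrease.

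From $\mathcal F(t)\le\mathcal F(t_0)$ the three conclusions follow. First, since $S(v)/n\ge0$ and $\tfrac12\int_0^{S(x)}\psi\ge0$, we get $\max_{i\neq j}\int_{\|x_i(t)-x_j(t)\|^2}^{\infty}f_{ij}\le\mathcal F(t_0)$ for all $t\in[t_0,T)$; by Assumption~\ref{assum: rep}, $\int_{d_0}^{d_1}f_{ij}=+\infty$, so $\|x_i(t)-x_j(t)\|^2$ must stay bounded away from $d_0$ by a fixed positive amount --- this is claim~(2). Second, dropping the time-$t$ terms and inserting the hypothesis \eqref{eq: initialcolav} gives $\tfrac12\int_{S(x^0)}^{S(x(t))}\psi\le\frac{S(v^0)}{n}+\max_{i\neq j}\int_{\|x_i^0-x_j^0\|^2}^{\infty}f_{ij}<\tfrac12\int_{S(x^0)}^{\infty}\psi$, so (arguing with the continuity of $t\mapsto S(x(t))$ and the monotonicity of $\psi$, as in Theorem~\ref{thm: main1}) $S(x(t))\le d^{*}$ for all $t$, where $d^{*}<\infty$ solves $\tfrac12\int_{S(x^0)}^{d^{*}}\psi=\frac{S(v^0)}{n}+\max_{i\neq j}\int_{\|x_i^0-x_j^0\|^2}^{\infty}f_{ij}$; in particular $\sup_tS(x(t))<\infty$. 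Third, $S(v(t))\le n\mathcal F(t_0)$. Since the right-hand side of \eqref{eq: model2} is continuous on the compact region thus delimited --- the only suspicious point, $S(v)=0$, is harmless because the numerator of the correction vanishes at least as fast as $S(v)$ --- the standard continuation argument gives $T=\infty$, which is claim~(1).

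Finally, strict inequality in \eqref{eq: initialcolav} forces $\tfrac12\int_{d^{*}}^{\infty}\psi>0$, hence $\mu:=\psi(d^{*})>0$ by monotonicity of $\psi$, so $\psi(S(x(t)))\ge\mu$ for all $t$. The functional $\mathcal F$ is non-increasing and bounded below, hence convergent, and integrating $\frac{d}{dt}\mathcal F\le-\tfrac{\mu}{2}S(v)$ yields $\int_{t_0}^{\infty}S(v(t))\,dt<\infty$; as $S(v)$ is Lipschitz in $t$ along the (now global, bounded) solution, $S(v(t))\to0$, completing claim~(3). I expect the decisive obstacle to be making the cancellation in the second paragraph rigorous: one must juggle the one-sided derivatives of $S(v)$, $S(x)$ and $\max_{i\neq j}(\cdot)$ at once, keep track of which index pair is active in each term, and --- the crux --- verify that the full all-pairs sum defining $\mathcal R(t)$ is genuinely dominated by the single worst-pair potential that appears in $\mathcal F$. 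A secondary difficulty, inherited from every spatially-decaying Cucker--Smale result, is the step that confines $S(x(t))$ below $d^{*}$ when $\psi$ is only integrable rather than uniformly positive.
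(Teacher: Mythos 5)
Your overall architecture (a single non-increasing functional whose boundedness simultaneously yields collision avoidance, a bounded flock, integrability of $S(v)$, and then Barbalat/Lipschitz to get $S(v)\to 0$) is the same as the paper's, and your steps for existence in the large, confinement of $S(x)$ below a finite $d^*$, and the final convergence are essentially the paper's arguments. The difference, and the gap, is in the functional itself. You take
$\mathcal F=\tfrac{1}{n}S(v)+\max_{i\neq j}\int_{\|x_i-x_j\|^2}^{\infty}f_{ij}+\tfrac12\int_0^{S(x)}\psi$
and hope that the Dini derivative of the single worst-pair potential absorbs the repelling residual in the differential inequality for $S(v)$. It cannot. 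The residual produced by Lemma~\ref{lem: difineq2} for the velocity-extremal pair $(i,i')$ is $-\Gamma_{i,i'}$, which is (one half of) a \emph{sum over all} $j$ of $\min$-selected rates $\frac{d}{dt}\int_{\|x_k-x_j\|^2}^{\infty}f_{kj}(r)\,dr$. Each receding pair contributes a strictly positive (destabilizing) summand. The Dini derivative of $\max_{i\neq j}(\cdot)$ sees only the active maximizing pair: if, say, the maximizing pair is momentarily static while agent $i$ recedes from several neighbors $j$, the residual is strictly positive and the max term contributes nothing, so $\frac{d}{dt}\mathcal F\le-\tfrac12\psi(S(x))S(v)$ fails. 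This is exactly the point you flag as ``the crux,'' and it is not a technicality to be juggled but the reason the functional must be built differently.

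The paper's fix is to put into the functional the \emph{full sums} $\sum_j\gamma_j^{i,i'}$ and $\sum_j\gamma_j^{h,h'}$ of pairwise potentials, with the index $k=k_{i,i'}$ in each $\gamma_j^{i,i'}$ chosen by the same minimization that defines $\Gamma_{i,i'}$, so that the cancellation is term-by-term rather than through a single envelope. It also carries the extra term $|v_h^{(l)}-v_{h'}^{(l)}|$ for the \emph{closest} pair $(h,h')$ (which need not be the velocity-extremal pair), because collision avoidance requires that the potential of that specific pair appear in, and be bounded by, the functional; your max-over-pairs term would have delivered that for free had the decrease held, but since it does not, you need the paper's construction. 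A secondary remark: your normalization ($1/n$ and the factor $1/2$) reproduces condition \eqref{eq: initialcolav} directly, whereas the paper reaches the same inequality through the bound $\mathcal E(t_0)\le 2S(v^0)+2n\max_{i\neq j}\int_{\|x_i^0-x_j^0\|^2}^{\infty}f_{ij}(r)\,dr$; that difference is cosmetic.
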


The proof of Theorem \ref{thm: main2} follows partly the steps of the proof of Theorem \ref{thm: main1} and partly the arguments developed in \cite{Cucker_2011}. We begin with a preliminary result that establishes a solution estimate of $v$ similar to Lemma \ref{lem: difineq}. Both the preliminary and the main results rely on the following observation.
\begin{rem}\label{rem: h} Note that $b_{ij}(t,x,v)$ can be written as
\begin{equation}\label{eq: bij}
b_{ij}(t,x,v)=\frac{1}{2S(v)}\frac{d}{dt}\int_{\|x_{i}(t)-x_j(t)\|^2}^{\infty}f_{ij}(r)\,dr.
\end{equation} Given a solution $(x,v)$ of \eqref{eq: model2} defined in $t\in [t_0,T)$ we set $h=h_t$ and $h^{'}=h^{'}_t \in [n]$ the agents that lie closes to each other, i.e. the indices that minimize $||x_{i}-x_{i^{'}}||$. While this mapping may not be unique, it is a piece-wise constant function of $t$.
\end{rem}
\begin{lem}\label{lem: difineq2}
The maximal solution $(x,v)$ of \eqref{eq: model2} defined in $[t_0,T)$ satisfies for all $i,i^{'}\in [n]$ and $l \in [r]$
\begin{equation}\label{eq: difen2}
\frac{d}{dt}\big|v_i^{(l)}-v_{i'}^{(l)}\big| \leq - m \big|v_i^{(l)}-v_{i'}^{(l)}\big|+(m-\rho_{i,i^{'}})S(v)-\Gamma_{i,i^{'}},
\end{equation} where $m=m(t)$ is an arbitrary but fixed, non-negative, integrable function, $\rho_{i,i_{'}}=\sum_{j}\min\{w_{ij}(t,x(t)),w_{i^{'}j}(t,x(t))\}$ and 
\begin{equation*}
\begin{split}
\Gamma_{i,i^{'}}&=\frac{1}{2} \sum_{j\neq i} \min\bigg\{\frac{d}{dt}\int_{||x_i-x_j||^2}^{\infty}f_{ij}(r)\,dr,
 \frac{d}{dt}\int_{||x_{i^{'}}-x_j||^2}^{\infty}f_{i^{'}j}(r)\,dr\bigg\}
\end{split}
\end{equation*}.
\end{lem}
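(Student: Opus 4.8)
The plan is to mimic, term by term, the continuous-time adaptation of Markov's contraction estimate \eqref{eq: cc}, now applied to the perturbed dynamics \eqref{eq: model2}. First I would fix $i,i'\in[n]$ and $l\in[r]$, and write down $\frac{d}{dt}\big|v_i^{(l)}-v_{i'}^{(l)}\big|$ using the right-hand Dini derivative, bounding it by $\operatorname{sgn}(v_i^{(l)}-v_{i'}^{(l)})\big(\dot v_i^{(l)}-\dot v_{i'}^{(l)}\big)$ wherever $v_i^{(l)}\neq v_{i'}^{(l)}$ (and handling the coincidence set in the usual way). Substituting the vector field of \eqref{eq: model2}, the difference splits into a pure consensus part $\sum_j w_{ij}(v_j^{(l)}-v_i^{(l)})-\sum_j w_{i'j}(v_j^{(l)}-v_{i'}^{(l)})$ and a perturbation part involving the $b_{ij}$ terms. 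For the consensus part, I would add and subtract an arbitrary fixed integrable $m=m(t)$ acting as a common "row sum" (since the true row sums $d_i$ differ across agents, this is exactly the trick needed for asymmetric weights, as in \cite{Motsch_Tadmo_14,SomBarecc15}), regroup so the weights appear as a doubly-stochastic-like average, and apply the pointwise minimum estimate $\min\{w_{ij},w_{i'j}\}$ to the terms multiplying the common velocities; this produces the $-m\,|v_i^{(l)}-v_{i'}^{(l)}|$ term and the $(m-\rho_{i,i'})S(v)$ term, with $\rho_{i,i'}=\sum_j\min\{w_{ij},w_{i'j}\}$ exactly the contraction coefficient mass.

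Next I would treat the perturbation term. Using Remark \ref{rem: h}, each $b_{ij}(t,x,v)(v_j^{(l)}-v_i^{(l)})$ is rewritten via \eqref{eq: bij} so that the factor $S(v)$ in the denominator cancels against the $S(v)$ that will appear when I bound $|v_j^{(l)}-v_i^{(l)}|\le S(v)$; the key sign observation is that $f_{ij}\ge 0$ and the structure $-\langle x_i-x_j,v_i-v_j\rangle$ makes $\frac{d}{dt}\int_{\|x_i-x_j\|^2}^\infty f_{ij}(r)\,dr$ the relevant total-derivative quantity. The same "common lower bound / pointwise minimum" manipulation used for the $w_{ij}$ is applied to the two families $\{f_{ij}\}$ and $\{f_{i'j}\}$, yielding the combined term $-\Gamma_{i,i'}$ with $\Gamma_{i,i'}=\tfrac12\sum_{j\ne i}\min\{\frac{d}{dt}\int_{\|x_i-x_j\|^2}^\infty f_{ij},\frac{d}{dt}\int_{\|x_{i'}-x_j\|^2}^\infty f_{i'j}\}$; the $\tfrac12$ is inherited directly from the $\tfrac12$ in \eqref{eq: bij}. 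Collecting the consensus estimate and the perturbation estimate gives precisely \eqref{eq: difen2}.

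The main obstacle I anticipate is the bookkeeping when one of the two velocities being compared is itself the extremal one defining $S(v)$, i.e. making the replacement $|v_j^{(l)}-v_i^{(l)}|\le S(v)$ rigorous \emph{with the correct signs} simultaneously for the positive-coefficient terms and the negative-coefficient ($b_{ij}$-induced) terms, so that the inequality direction is preserved in every case. One must be careful that $b_{ij}$ need not be sign-definite, so the cancellation of $S(v)$ is only legitimate after the terms are grouped into the $\min$-structure; doing this grouping before taking absolute values is what makes $\Gamma_{i,i'}$ come out as a clean sum of minima of total derivatives rather than a messier expression. A secondary technical point is the non-smoothness of $t\mapsto S(v(t))$ and of the index maps $h_t,h'_t$ from Remark \ref{rem: h}: I would note, as is standard, that these are piecewise-constant/Lipschitz so the Dini-derivative manipulations are valid, and the inequality \eqref{eq: difen2}, holding for \emph{every} pair $i,i'$ and dimension $l$, is exactly the ingredient needed to later bound $\frac{d}{dt}S(v(t))$ by maximizing over $i,i',l$ — parallel to how Lemma \ref{lem: difineq} feeds Theorem \ref{thm: main1}.
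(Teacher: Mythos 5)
Your proposal is correct and follows essentially the same route as the paper: an integrating-factor/common-row-sum $m(t)$, the Markov contraction-coefficient argument with the identity $2\min\{a,b\}=a+b-|a-b|$ producing $\rho_{i,i'}$, and the rewriting of $b_{ij}$ via \eqref{eq: bij} so that the $S(v)$ in the denominator cancels and the factor $\tfrac12$ yields $\Gamma_{i,i'}$. The only cosmetic difference is that the paper keeps $w_{ij}+b_{ij}$ together in a single zero-sum coefficient vector $\alpha_j$ and separates the two families only when bounding $\tfrac12\sum_j|\alpha_j|$ by the triangle inequality, which sidesteps the diagonal-term bookkeeping you flag as the main obstacle.
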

The proof of Lemma \ref{lem: difineq2} is put in the Appendix. When $i$ and $i^{'}$ are chosen so as to maximize $\max_{l} |v_i^{(l)}-v_{i^{'}}^{(l)}|=S(v)$ then one obtains the estimate
\begin{equation}\label{eq: maxdiam}
\frac{d}{dt}S(v)\leq -\rho_{i,i^{'}} S(v)-\Gamma_{i,i^{'}}.
\end{equation}
\begin{proof}[Proof of Theorem \ref{thm: main2}] For the reader's convenience we make the proof to consist of the following four steps: Collision avoidance conditions for the maximal solution, existence of the maximal solution for all times, persistent connectivity of the flock and velocity alignment. 

\noindent \textit{a) Collision Avoidance:} Define $\mathcal E:\mathbb R^{nr}\times \mathbb R^{nr}\rightarrow \mathbb R$ $$\mathcal E(x,v)=S(v)+|v_{h}^{(l)}-v_{h'}^{(l)}|+\sum_{j}\gamma_j^{i,i^{'}}+ \sum_j \gamma_{j}^{h,h^{'}}$$ where $l\in [r]$ is arbitrary but fixed,
\begin{equation*}\begin{split}
&\gamma_j^{i,i^{'}}=\gamma_j^{i,i^{'}}(x,v)=\int_{||x_k-x_j||^2}^{\infty}f_{kj}(r)\,dr~\text{with}~k=k_{i,i^{'}} :\\
&f_{kj}(||x_k-x_j||^2)\langle x_k-x_j,  v_k-v_j \rangle=\min\big\{ f_{ij}(||x_i-x_j||^2)\langle x_i-x_j,  v_i-v_j \rangle,  f_{i^{'}j}(||x_{i^{'}}-x_j||^2)\langle x_{i^{'}}-x_j,  v_{i^{'}}-v_j \rangle \big \}
\end{split}
\end{equation*}
We remind that $h,h^{'}\in [n]$ are as in Remark \ref{rem: h} and $i,i^{'}\in [n]$ are chosen to maximize the velocity diameter and consequently satisfy Eq. \eqref{eq: maxdiam}. All of these indices are potentially time dependent but piecewise constant. Differentiating $\mathcal E$ along $(x(t),v(t)),t\geq t_0$, Lemma \ref{lem: difineq2} for $m(t)=\rho_{i,i^{'}}(t,x(t))$ and \eqref{eq: maxdiam} will yield 
\begin{equation*}\begin{split}
\frac{d}{dt}\mathcal E(x(t),v(t))=&-\rho_{i,i^{'}}S(v)-\Gamma_{i,i^{'}}-\rho_{i,i^{'}}|v_{h}^{(l)}-v_{h^{'}}^{(l)}|+\big(\rho_{i,i^{'}}-\rho_{h,h^{'}}\big)S(v)-\Gamma_{h,h^{'}}\\
&+\frac{d}{dt}\sum_{j}\gamma_j^{i,i^{'}}(x(t),v(t))+\frac{d}{dt}\sum_{j}\gamma_j^{h,h^{'}}(x(t),v(t))\\
=&-\rho_{i,i^{'}}|v_{h}^{(l)}-v_{h^{'}}^{(l)}|-\rho_{h,h^{'}}S(v)\leq 0
\end{split}
\end{equation*} Note that $$\text{either}~\int_{\|x_{h}(t)-x_{h'}(t)\|^2}^{\infty}f_{hh^{'}}(r)\,dr~\text{or}~ \int_{\|x_{h}(t)-x_{h'}(t)\|^2}^{\infty}f_{h^{'}h}(r)\,dr$$ is a member of the sum $\sum_{j}\gamma_j^{h,h^{'}}$. Then $\mathcal E(t)\leq \mathcal E(t_0)$ implies that for the two agents $h$ and $h'$ that are in closest distance from each other
\begin{equation*}\begin{split}
&\min\bigg\{\int_{\|x_{h}(t)-x_{h'}(t)\|^2}^{\infty}f_{hh'}(r)\,dr, \int_{\|x_{h}(t)-x_{h'}(t)\|^2}^{\infty}f_{hh'}(r)\,dr\bigg\}  \leq \mathcal E(t)\leq \mathcal E(t_0) <\infty
\end{split}
\end{equation*} Since both $f_{hh^{'}}$ and $f_{h^{'}h}$ satisfy Assumption \ref{assum: rep}, we proved that $\|x_{i}(t)-x_{j}(t)\|\geq d^*$ for some $d^*>d_0$ for all $i\neq j\in [n]$ and $t\in[t_0 ,T)$.

\noindent \textit{b) Existence in the Large:} From $\mathcal E(t)\leq \mathcal E(t_0)$ we also deduce that $S(v(t))\leq \mathcal E(t_0)<\infty$ hence $$S(x(t))\leq S(x^0)+T\mathcal E(t_0):=\bar{X}$$ and the solution lies for $[t_0,T)$ in $$\Omega=\{(x,v): S(x)\leq \bar{X}, \|x_{i,j}\|\geq d^*,  i\neq j, S(v) \leq \mathcal E(t_0)\}  $$ where $d^*$ defined in the first part of the proof. However $\Omega$ is a compact subset of $$ \{(x,v): S(x)\leq \bar{X}, \|x_{i,j}\|> d_0,  i\neq j \}.$$ The fundamentals in the theory of differential equations assure, however, that this cannot occur if $T<\infty$ (see for example Theorem 1.21 in \cite{markley04}) hence it is  ensured that the solution is eventually defined for all $t\geq t_0$.

\noindent \textit{c) Bounded Flock:} We will show now that the initial settings we consider in the statement of the Theorem keeps all agents in finite relative  distance. We recall from the first step that 
\begin{equation*}
\frac{d}{dt}\mathcal E(x(t),v(t))\leq -\rho_{i,i^{'}}|v_{h}^{(l)}-v_{h^{'}}^{(l)}|-\rho_{h,h^{'}}S(v)\leq- \rho_{h,h^{'}}S(v)
\end{equation*} If we integrate from $t_0$ to $t$ and use the lower bound of $\rho_{h,h^{'}}$ taken in view of Assumption \ref{assum: coupling}  we obtain \begin{equation}\label{eq: bound on xu}
\begin{split}
&\mathcal E(t)-\mathcal E(t_0)\leq -\int_{t_0}^{t}n\psi\big(S(x(r))\big)S(v(r))\,dr\Rightarrow \\
&\mathcal E(t_0) \geq  \int_{t_0}^{t}n\psi\big(S(x(r))\big)S(v(r))\,dr.
\end{split}
\end{equation} If we set $p(s)=S(x(s))$ we observe that $\frac{dp}{ds}\leq S(v(s))$ and deduce $$\int_{S(x^0)}^{S(x(t))}n\psi(s)\,ds\leq \mathcal E(t_0).$$ Had the flock been dissolved, there should be a sequence $\{t_i\}_{i\geq 0}$ with $t_i\rightarrow \infty$ as $i\rightarrow \infty$, so that   $S(x(t_i))\rightarrow \infty$ as $t_i\rightarrow \infty$. This would mean that
\begin{equation*}\begin{split}
\int_{S(x^0)}^{\infty}n\psi(r)\,dr&\leq \mathcal E(t_0) \leq 2S(v^0)+2n\max_{i\neq j}\int_{||x_i^0-x_j^0||^2}^{\infty}f_{ij}(r)\,dr 
\end{split}
\end{equation*}
that is not possible in view of \eqref{eq: initialcolav}. Thus \begin{equation}\label{eq: boundflock}
\sup_{t\geq t_0}S(x(t))<\infty,
\end{equation} i.e. the flock remains connected and bounded.
\noindent \textit{d) Convergence to flocking:} At first we combine \eqref{eq: bound on xu} and \eqref{eq: boundflock} to conclude  $$\int^{\infty}S(v(r))\,dr<\infty.$$ Moreover, we used $\frac{d}{dt}\mathcal E(x(t),v(t))<0$ to show that $\sup_{t\geq t_0}S(v(t))<\mathcal E(t_0)$ and this implies that $|S(v(t))|$ is uniformly bounded. It only remains to show that $S(v(t))$ is uniformly continuous so that Barbalat's Lemma (Lemma 8.2 in \cite{Khalil}) applies to $k(t):=\int_{t_0}^{t}S(v(s))\,ds$ to conclude $$\dot{k}(t)=S(v(t))\rightarrow 0~\text{as}~t\rightarrow \infty.$$ The last condition is shown by direct application of the definition of uniform continuity: For any $t_1,t_2\geq t_0$ close to each other, there exist $i,i'\in [n]$ and $l\in[r]$ such that \begin{equation*}\begin{split}
|S(v(t_1))-S(v(t_2))|&=|(v_i^{(l)}(t_1)-v_{i'}^{(l)}(t_1))-(v_i^{(l)}(t_2)-v_{i'}^{(l)}(t_2))|\leq 2\max_{h\in\{i,i'\}}|v_h^{(l)}(t_1)-v_h^{(l)}(t_2)|\\
&\leq 2\max_{h\in\{i,i'\}} |\dot v_{h}^{(l)}(t^*)|\cdot |t_1-t_2|
\end{split}
\end{equation*}
Since $\dot v_{h}^{(l)}(t)$ satisfied \eqref{eq: model2}, its absolute value is bounded above by finite number of terms $|w_{ij}|\leq \bar{w}$, $$|b_{ij}(x,v)|\leq \max_{ij}f_{ij}(\underline{d}^2)\big(\sup_{t}S(x(t)))^2\mathcal E(t_0)$$ where $\underline{d}=\inf_{t}\min_{i\neq j}|x_{ij}(t)|>d_0$ and $|S(v(t))|\leq \mathcal E(t_0)$, each of which is independent of time and the uniform continuity property holds true. Barbalat's lemma can then be applied concluding the proof.
\end{proof}

\section{Examples And Simulations} \label{sect: simulation}
In this section we will briefly present applications of our rigorous results. We will use a group of $n=5$ and examine networks of different dimensions and types of solutions. The initial time is taken $t_0=0$. The coupling rates are set $w_{ij}(t,x)=w\frac{1.5+0.5\sin(t)}{(|x|+\beta_{ij}^2)^{\delta}}$ for $\delta\geq 0$, $\beta_{ij}\in (0,\sqrt{2})$ and $w\geq 0$ a uniform control parameter. All simulations are carried with the \texttt{ode23} routine in \texttt{MATLAB}. 
\subsection{Scalar Networks}
We consider \eqref{eq: nominal}  with $g(t,z)=\cos(t)(z-1)(z-2)$. Its solution $z(t)=z(t,0,z^0),~t\geq 0$ is $$z(t)=\frac{2-\frac{z_0-2}{z_0-1}e^{\sin(t)}}{1-\frac{z_0-2}{z_0-1}e^{\sin(t)}}.$$
 It can be easily checked that for $z^0\in (1,2)$ the solution exists for all times and is periodic with period 1. 
We consider now the network \eqref{eq: model1} and its maximal solution $(x,v)$ with  initial setup $v^0=(1.2,1.4,1.1,1.5,1.3)$. We select $w=1$ throughout this example. The solution $(x,v)$ exists for all times remaining trapped in $U=[1,2]$ but we can in fact say much more. Due to the monotonicity of solution $z$, using the same arguments as in the proof of Theorem \ref{cor: main1} we can calculate the estimate $K$ by evaluating it over $z(t,0,1.5)$, since no solution $v_i$ will essentially exceed $z(t,0,1.5)$. Thus we estimate $K$ as: 
\begin{equation*}
K\leq 2\frac{2+e^{\sin(t)}}{1+e^{\sin(t)}}-3\leq \frac{1-e^{-1}}{1+e^{-1}}\approx 0.462
\end{equation*}In addition, $w_{ij}(t,x)\geq \frac{1}{(|x|+2)^{\delta}}$. From the conditions of Theorem \ref{cor: main1}, exponential convergence to flocking with rate $\varepsilon>0$ can occur if we can find number $d$ such that 
\begin{equation*}
0.4<5\frac{(d+2)^{(1-\delta)}-(S(x^0)+2)^{(1-\delta)}}{1-\delta}-0.462(d-S(x_0))
\end{equation*} and \begin{equation*}
\frac{5}{(d^*+2)^{\delta}}-0.462>0
\end{equation*}
for $d^*$ that after substituting it to $d$, it can achieve equality in the first condition. At $\delta=0$ one can easily verify that the first condition holds true for every $d^*$ and that the first inequality is always satisfied for $d$ large enough. 
As a brief numerical exploration we took $S(x^0)=1,3,5$ and examined the two conditions one after the other. The results are plotted in Figure \ref{fig: example1} and suggest that all the conditions can be satisfied for $\delta<1.1$. Figure, \ref{fig: example1_1} presents realizations of $(x,v)$ with $S(x^0)=5$. The first simulation depicts a network with the weak coupling $\delta=10$, where no synchronization can happen. In the second attempt we adopt a stronger coupling with $\delta=4$ that however still does not yield a feasible $d$ to satisfy our conditions. Finally for $\delta=0.9$ we can see synchronization of solutions that occur exponentially fast.
\begin{figure*}[h]
\begin{center}
\includegraphics[scale=0.4]{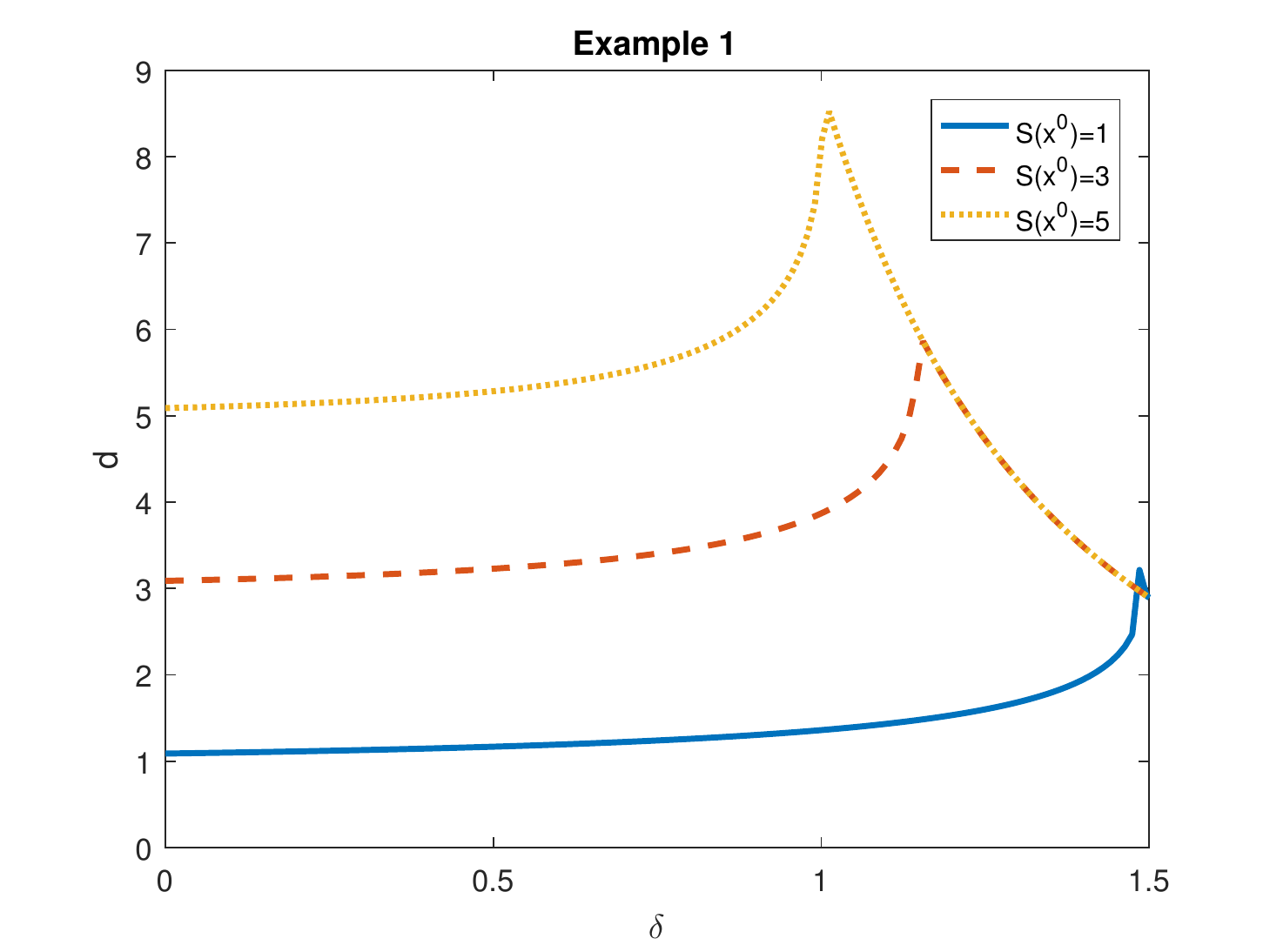}
\includegraphics[scale=0.4]{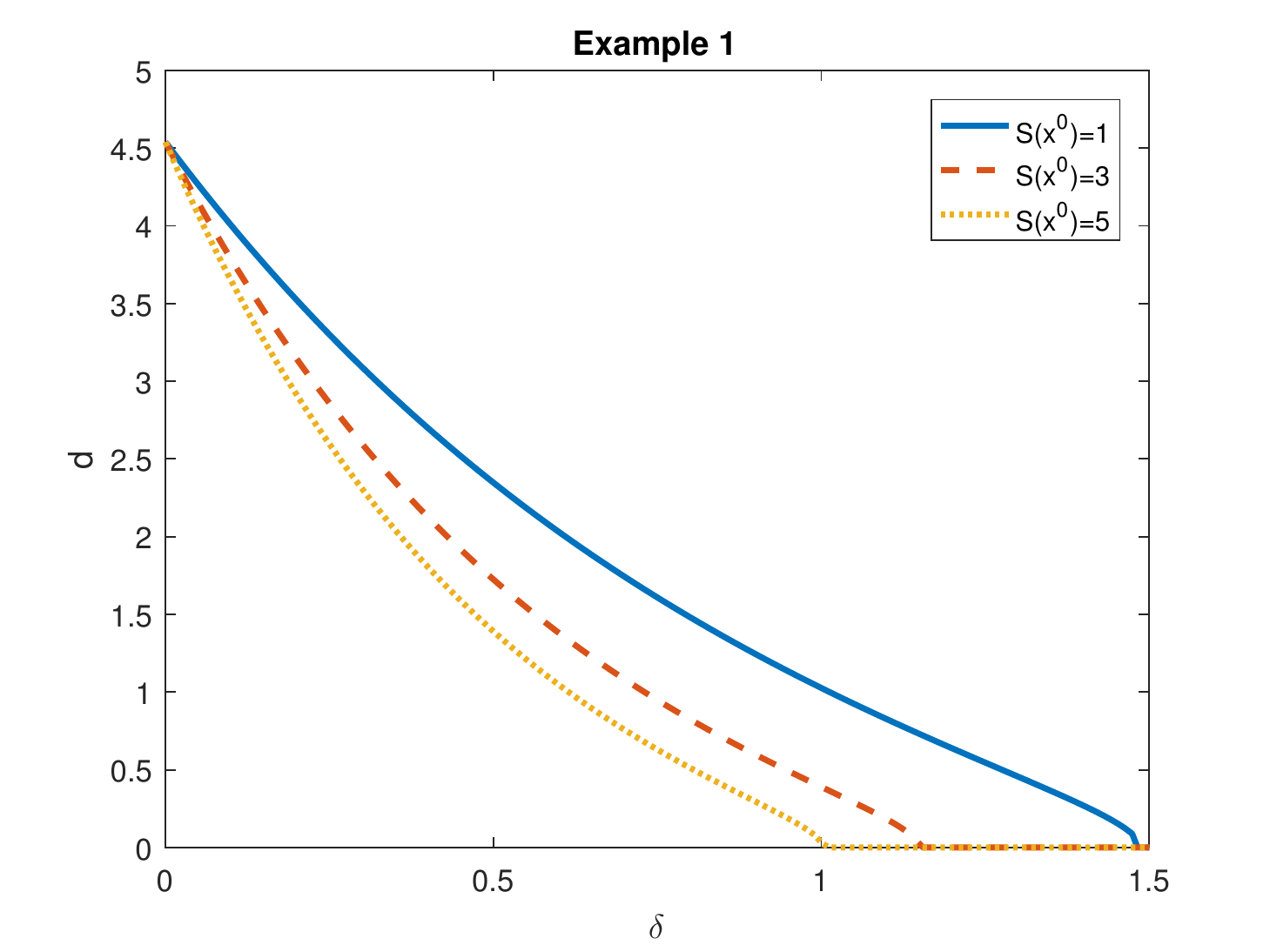}
\caption{The stability condition curves for different values of $S(x^0)$. The larger the initial spread of the relative positions, the weaker the couplings initially are. This makes the existence of a solution $d$ harder.}\label{fig: example1}
\end{center}
\end{figure*}

\begin{figure*}[h]
\begin{center}
\includegraphics[scale=0.4]{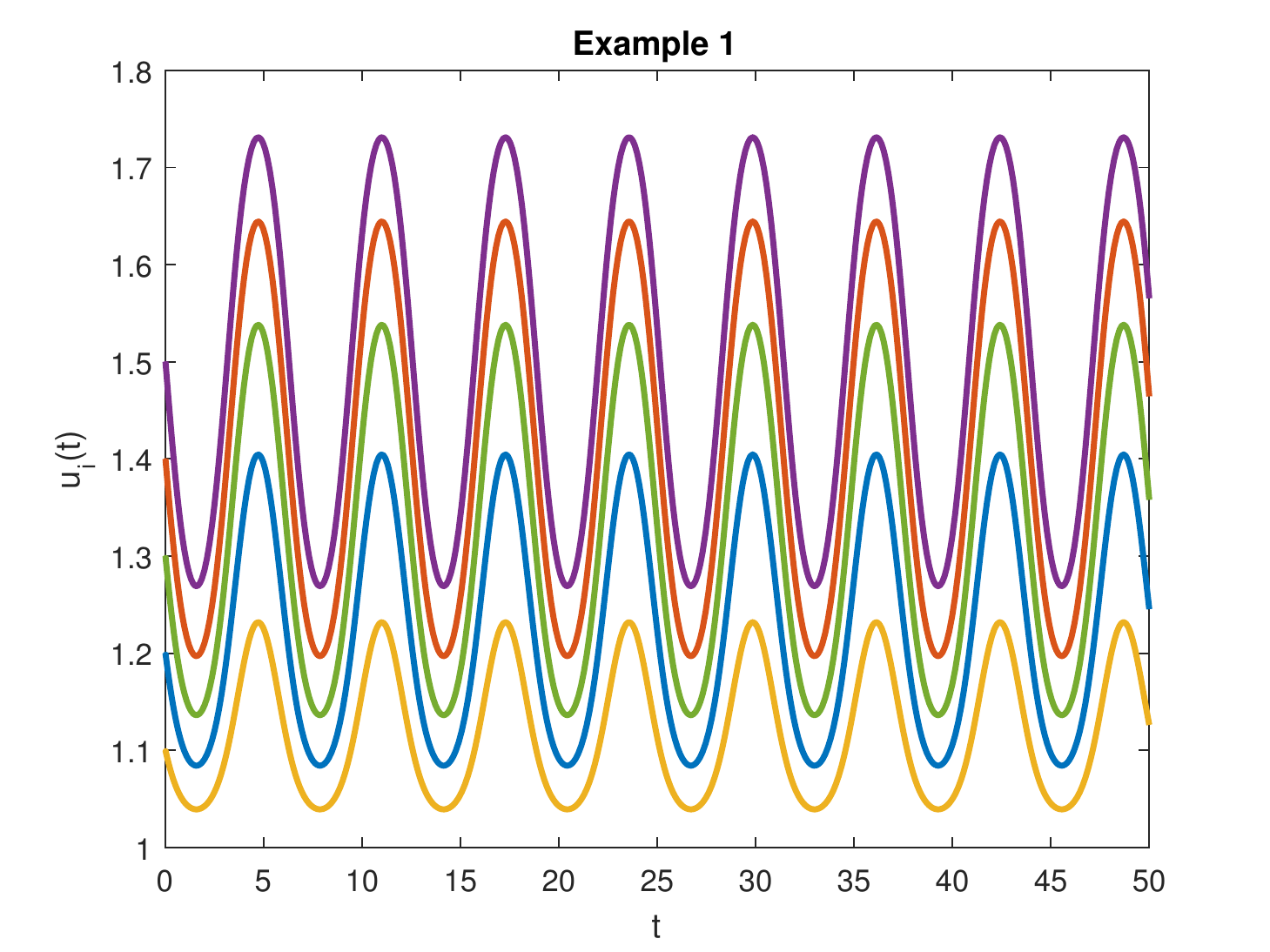}
\includegraphics[scale=0.4]{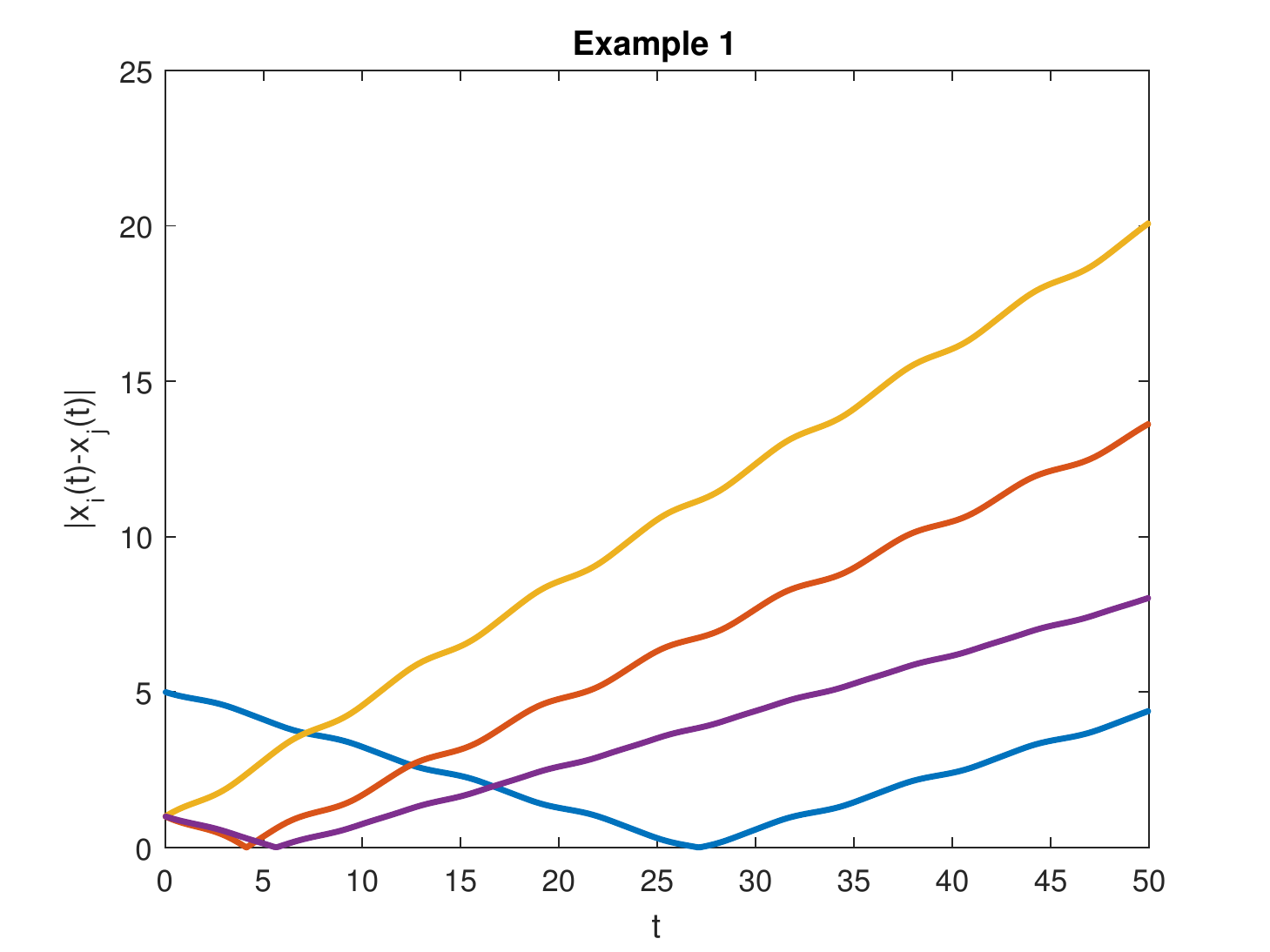}
\includegraphics[scale=0.4]{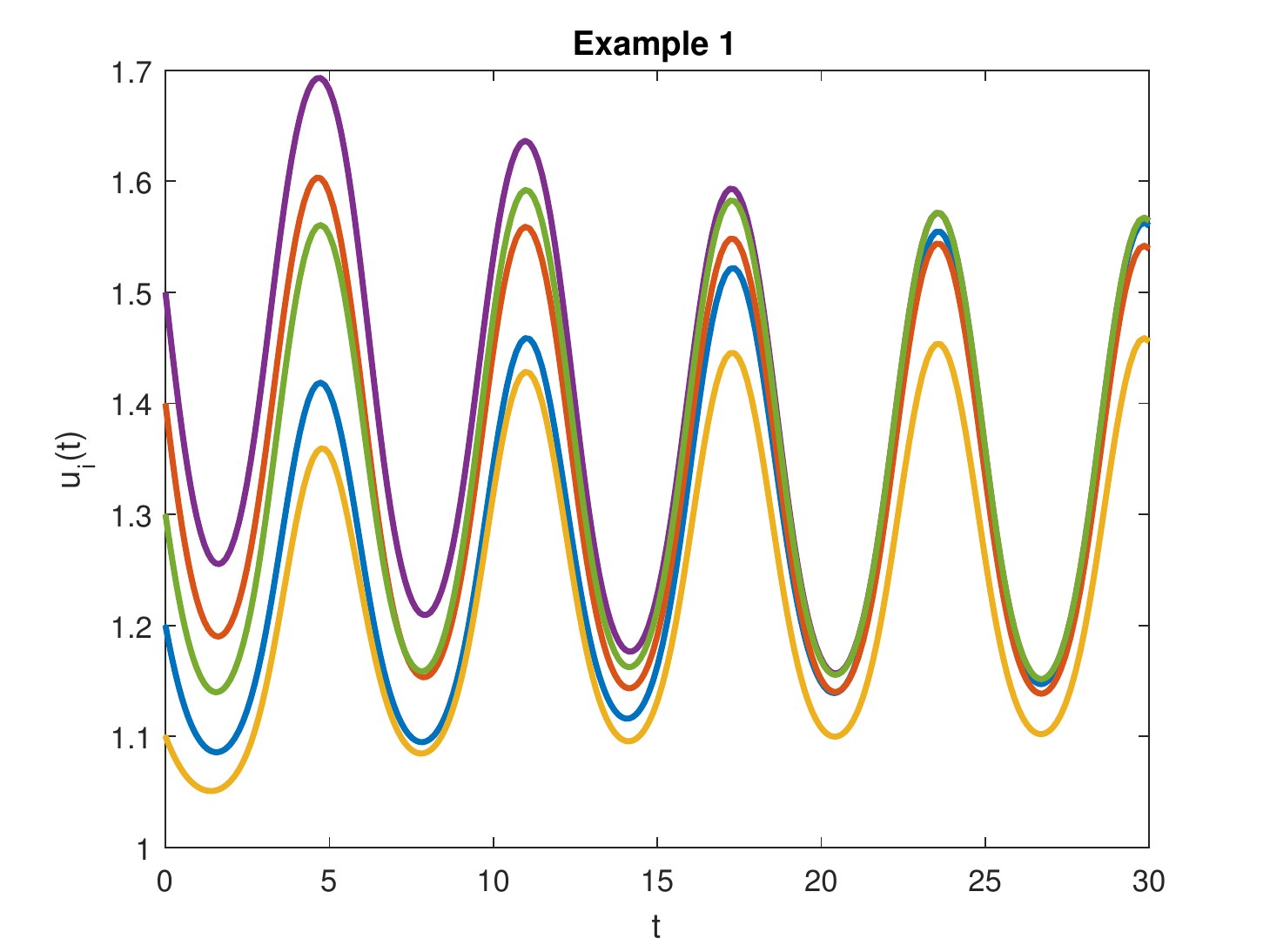}
\includegraphics[scale=0.4]{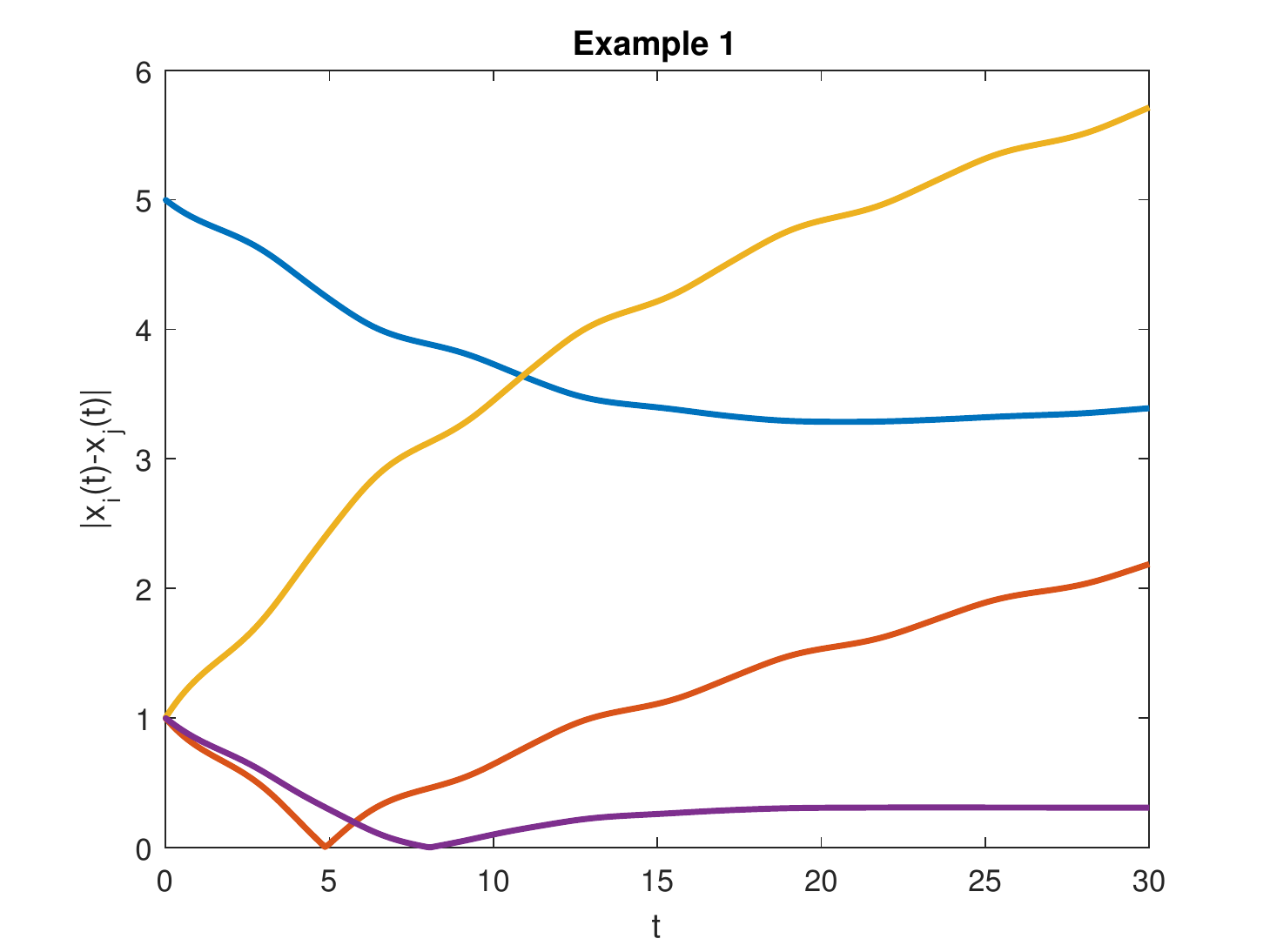}
\includegraphics[scale=0.4]{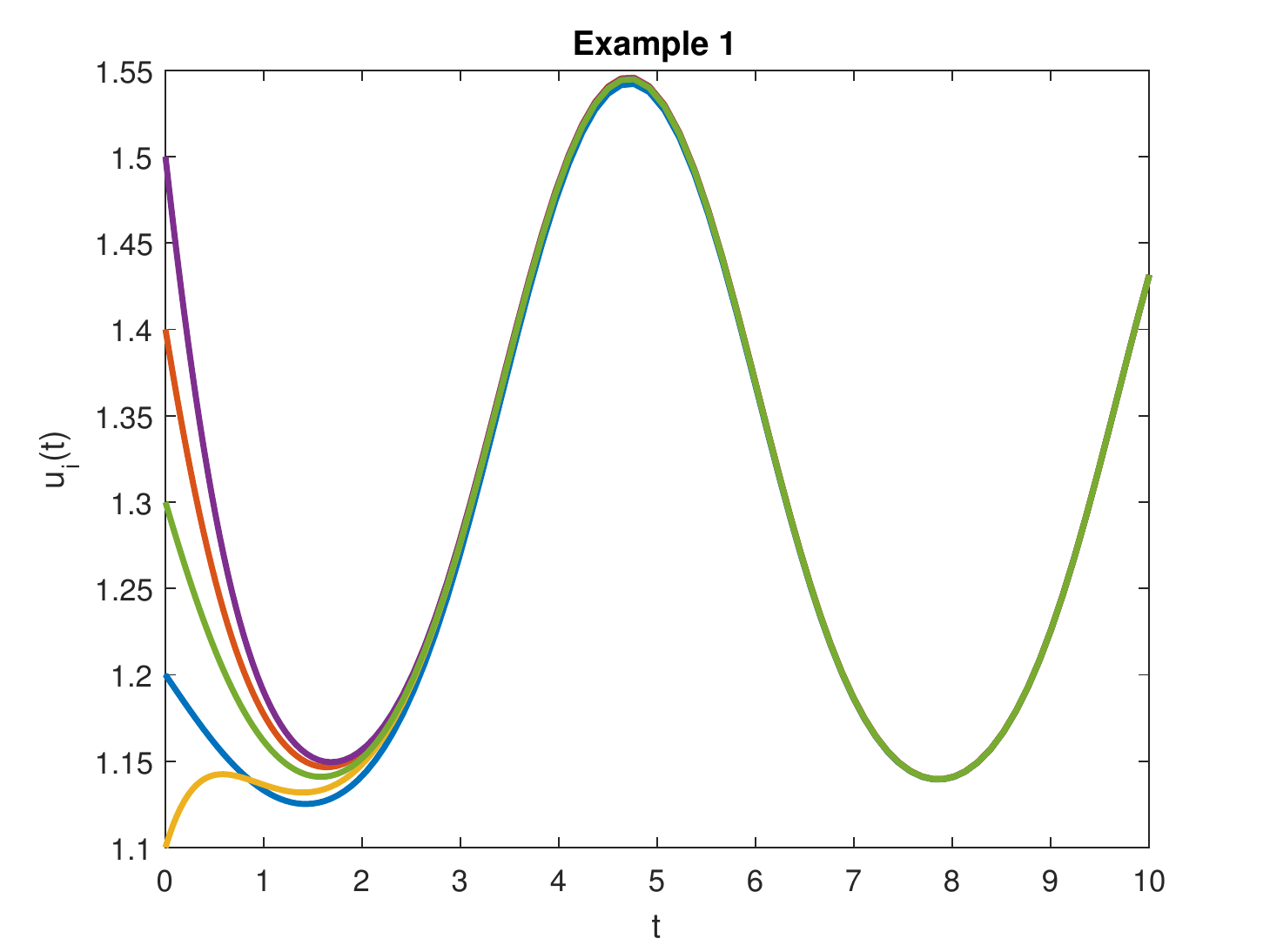}
\includegraphics[scale=0.4]{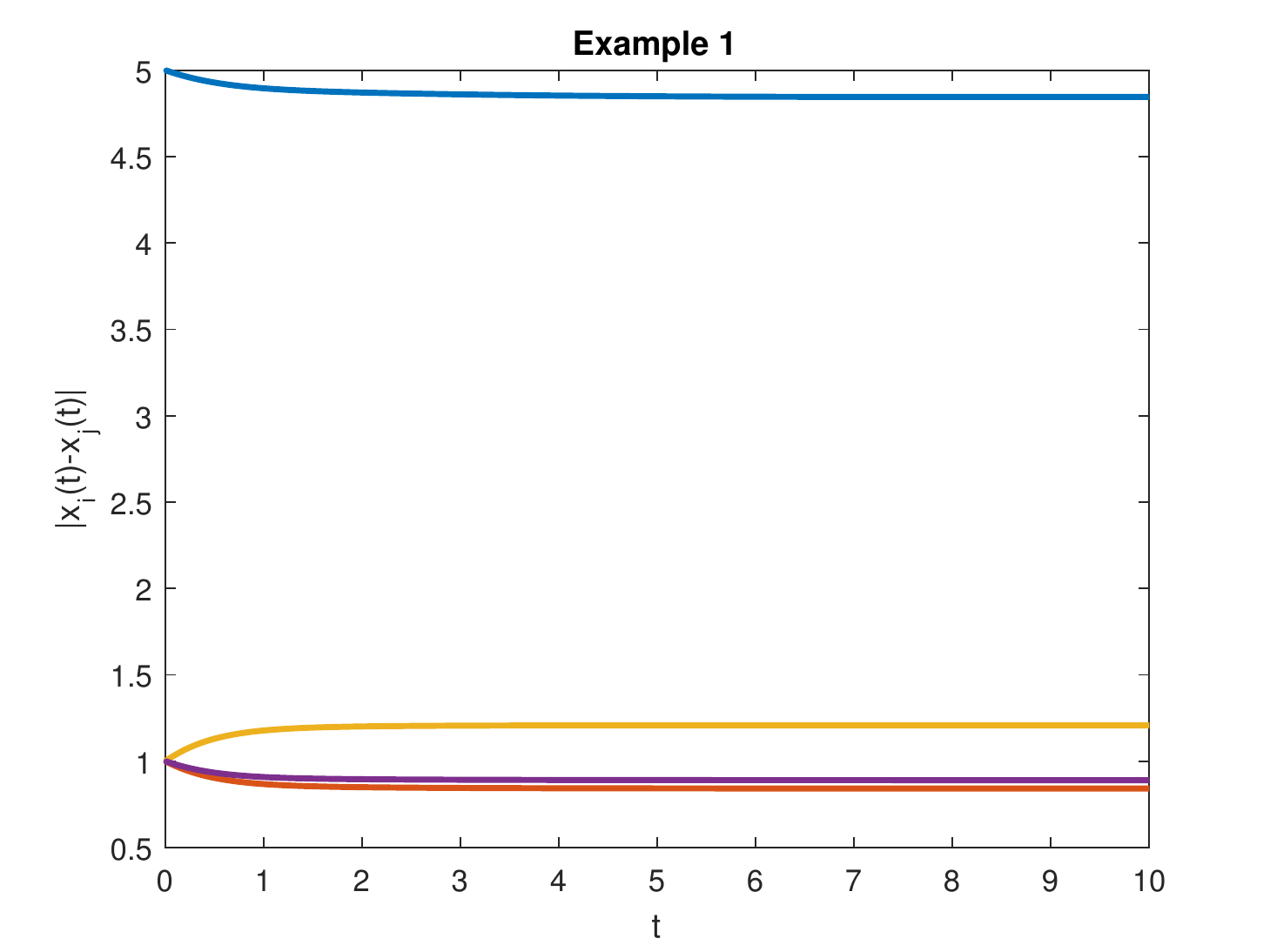}
\caption{Realization of $(x,v)$ of Example 1 with $\delta=10$, $\delta=4$ and $\delta=0.9$. The first column presents $v_i,~i\in [5]$ and the second column presents the differences $|x_i-x_j|,~i<j$.}\label{fig: example1_1}
\end{center}
\end{figure*}

\subsection{Example 2. Chaotic Flocking.}
The next example is on the problem of synchronizing chaotic oscillators. The nominal equation is chosen to be the Lorenz system
\begin{equation*}
g(z^{(1)},z^{(2)},z^{(3)})=\begin{bmatrix}
10(z^{(2)}-z^{(1)})\\
-z^{(2)}+z^{(1)}(28-z^{(3)})\\
-\frac{8}{3}z^{(3)}+z^{(1)}z^{(2)}
\end{bmatrix}.
\end{equation*} The solutions of \eqref{eq: nominal} converge for almost all initial values to a strange attractor \cite{sparrow1982lorenz}. This means that there is a $g$-invariant, convex subset $U\subset \mathbb R^3$ that includes the limit set. It can be verified that $U\subset \tilde{U}=[-17,17.5]\times [-22,24.5]\times [7,45]$. Furthermore, $\tilde U$ can be numerically verified to be $g$-invariant while it is clearly compact and convex. We can apply Theorem \ref{cor: main1} to \eqref{eq: model1} with $g$ as in this example and initial conditions in $v^0_i\in \tilde{U},~i\in [5]$. Then we can calculate an estimate on $K$ based on $U$ $$K\leq \max_{z\in \tilde{U}}\{0,|28-z^{(3)}|-1+|z^{(1)}|,|z^{(2)}|+|z^{(1)}|-\frac{8}{3}\}\approx 39.4\footnote{Clearly, better estimates on $K$ may be achieved when $\tilde{U}$ is substituted with a sharper estimate of $U$.}$$  We set $S(v^0)=S(x^0)=9$ and Theorem \ref{cor: main1} applies if one can find $d$ such that 
\begin{equation*}
\begin{split}
&9<\int_{9}^{d}\bigg(\frac{5w}{(r+2)^{\delta}}-39.4\bigg)\,dr ~~ \text{and} ~~ \frac{5w}{(d^*+2)^{\delta}}-39.4>0
\end{split}
\end{equation*} where $d^*$ when substitutes $d$ in the first condition achieves equality. Note that for any $\delta$ small enough one can always find $d^*$ such that
\begin{equation*}
9+39.4(d^*-9)> 39.4(d^*+2)^{\delta}\int_{9}^{d^*}\frac{dr}{(r+2)^{\delta}}
\end{equation*} This is a relation after solving for $w$ in the first condition (that is an equality for $d=d^*$) and substituting in the second condition. If for example we take $\delta=0.5$ then the last condition is satisfied with $d^*=11.67$ and $w\approx 150$. Figure \ref{fig:chflocking} depicts our results for strong $\delta=0.5$ and loose $\delta=7$ coupling, respectively. The rest of the parameters $w$ and initial configurations were kept identical. These two illustrative choices represent the case that conditions of Theorem \ref{cor: main1} are met and when the conditions are violated, respectively.
\begin{figure}[h]
\begin{center}
\includegraphics[scale=0.4]{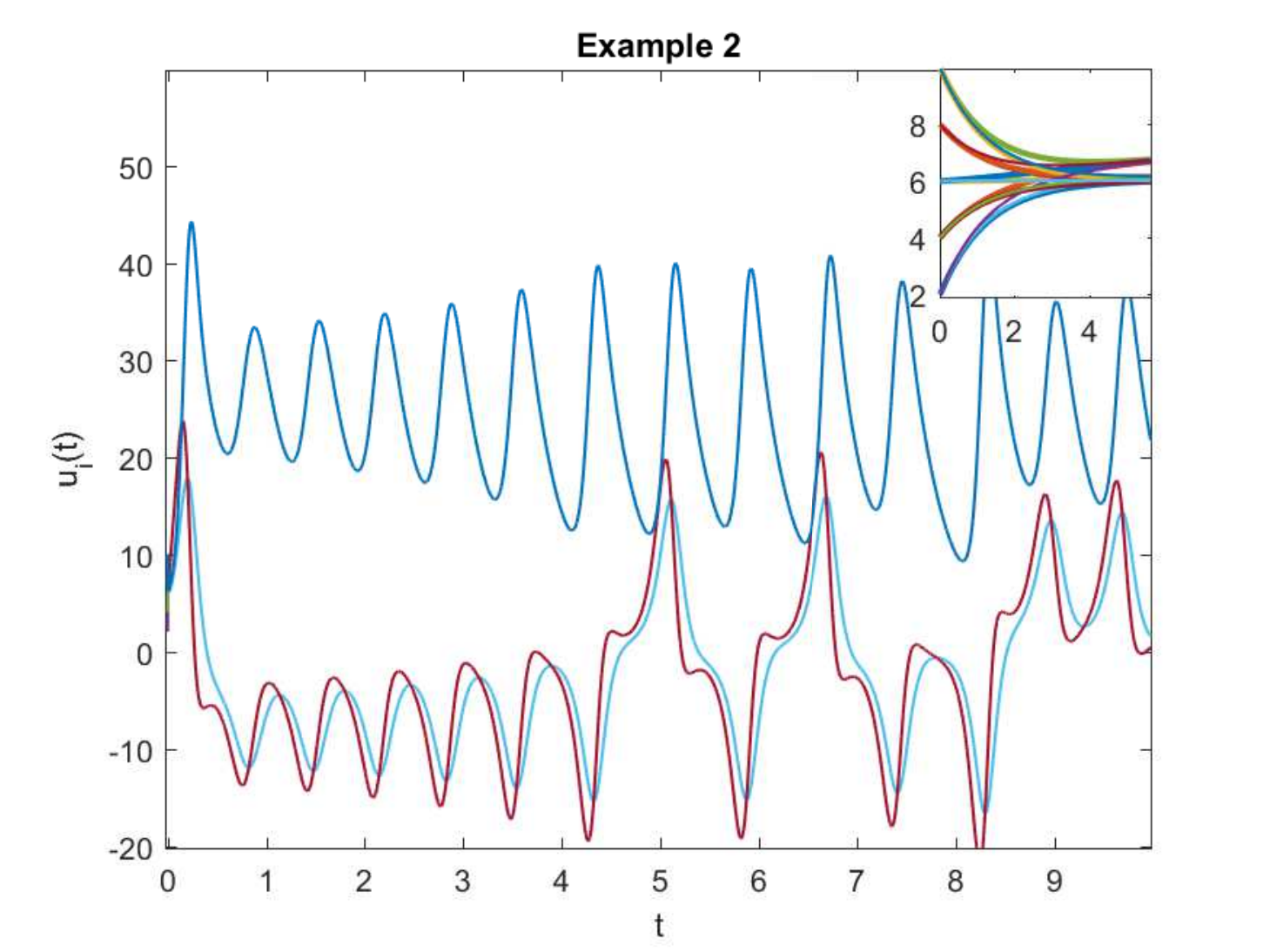}
\includegraphics[scale=0.4]{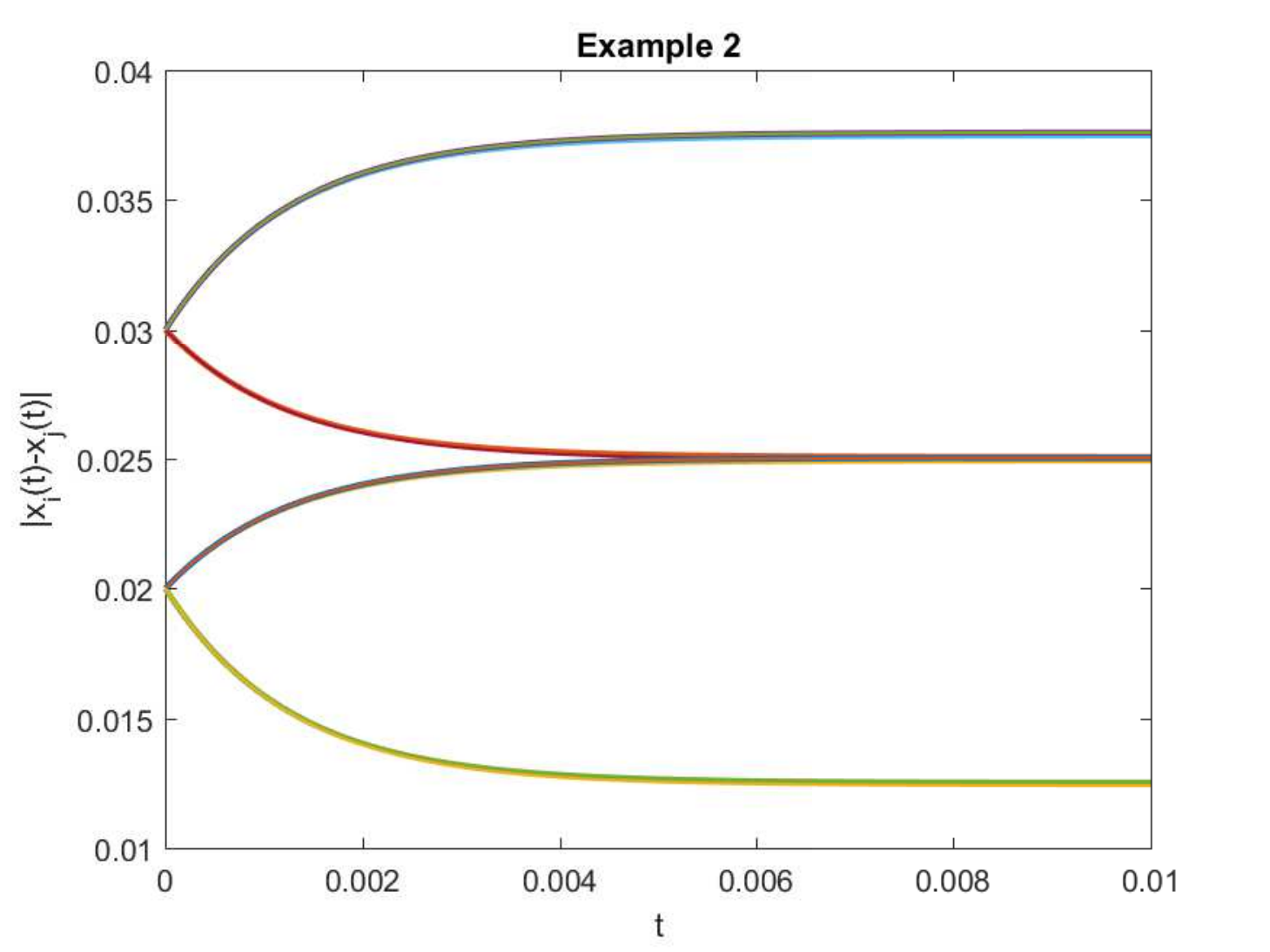}
\includegraphics[scale=0.4]{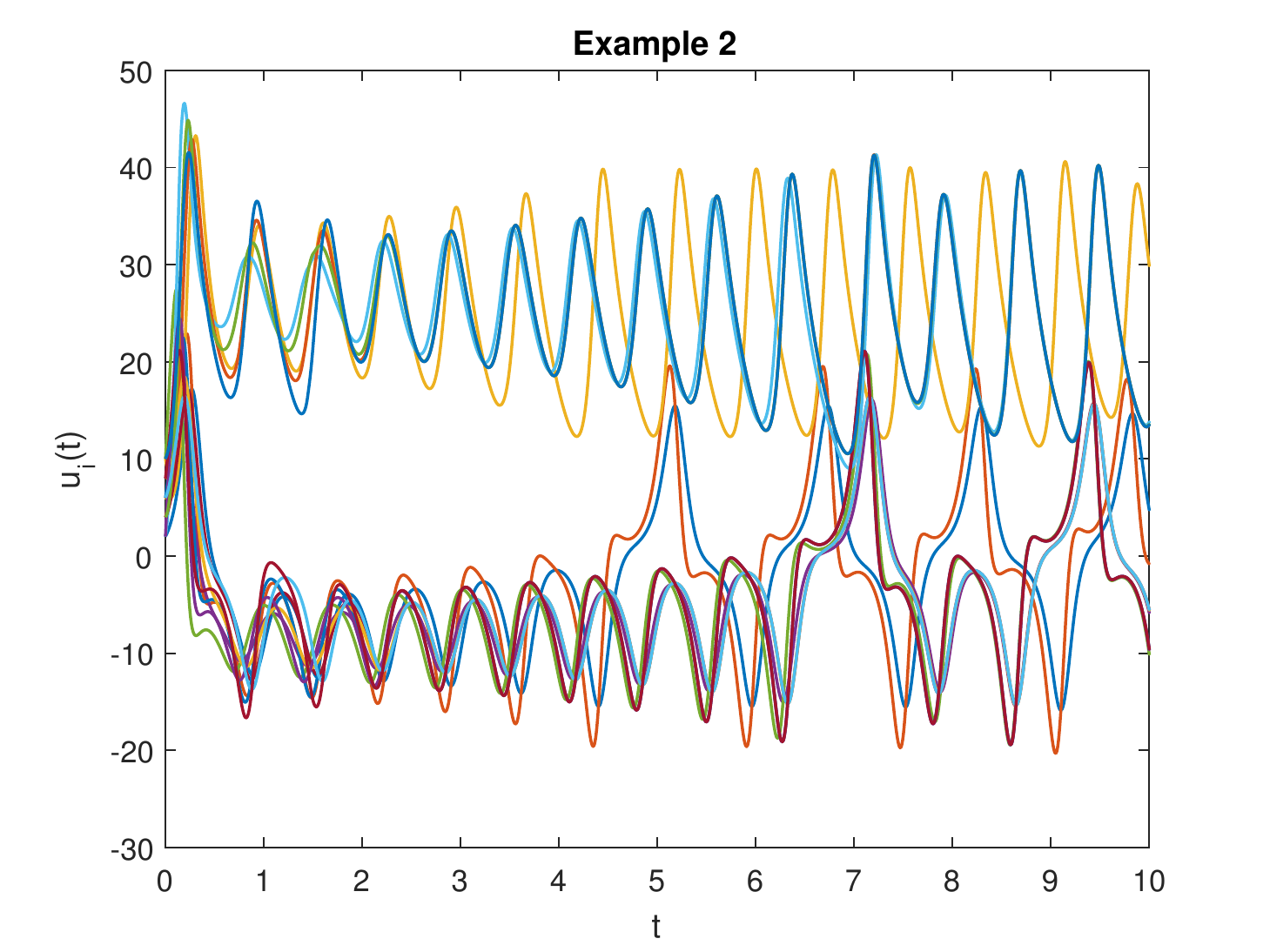}
\includegraphics[scale=0.4]{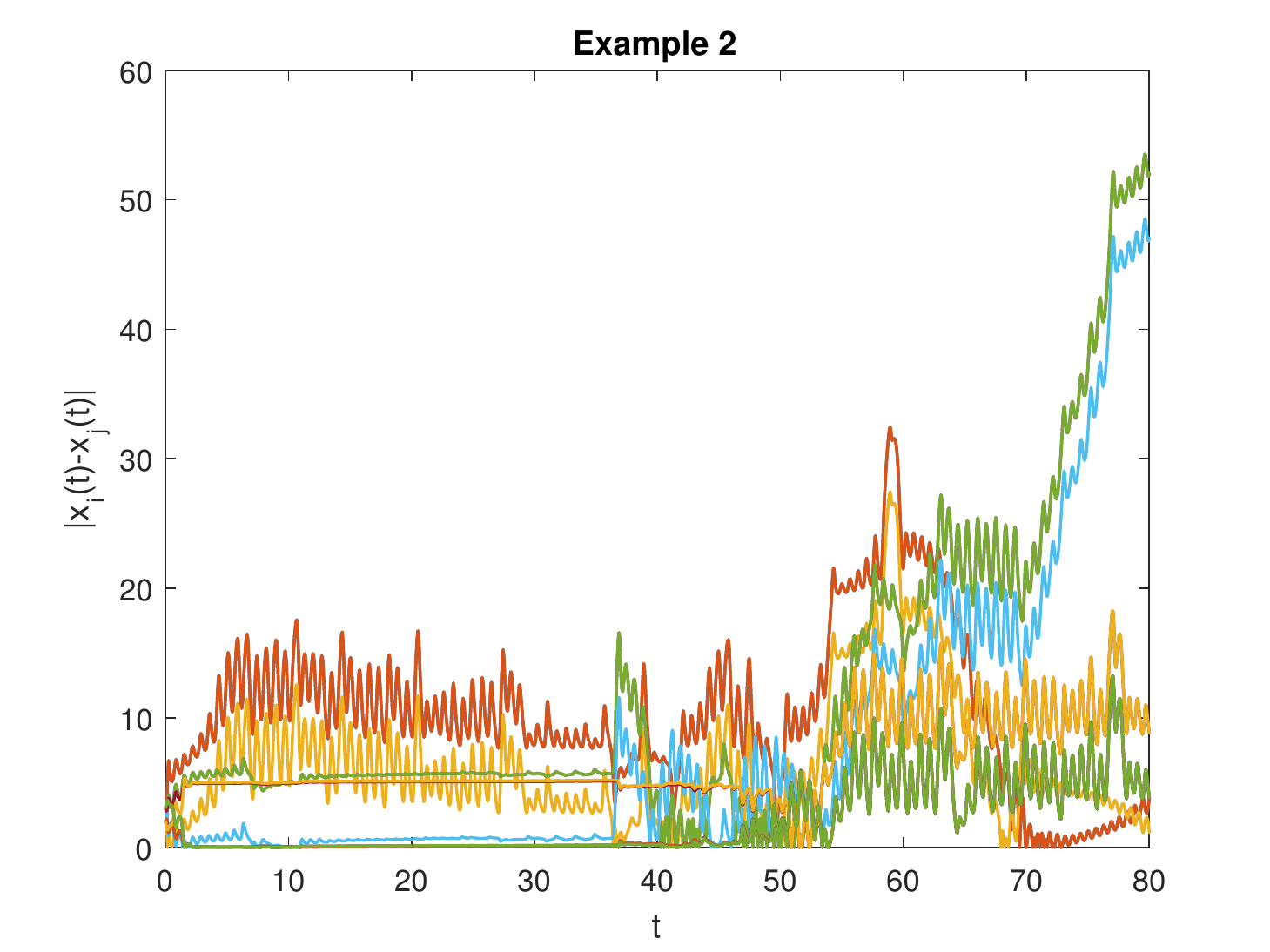}
\caption{Simulation Example 2. Synchronization of chaotic oscillators with $r=3$. In the first two figures we observe extremely strong synchronization under the theoretical sufficient conditions, $w=150$ and $\delta=0.5$. In the two figures that follow we present loose coupling with the same $w$ and $\delta=7$. We observe the of our network to syhnchronize.}\label{fig:chflocking}
\end{center}
\end{figure}

\subsection{Example 3. Collision Avoidance.} We conclude this section with an illustration of Theorem \ref{thm: main2}. We consider the network \eqref{eq: model2} with  $r=2$. The repelling functions functions are taken $$f_{ij}(r)=\frac{C_{ij}}{(r-r_0)^{\varphi}}$$ for numbers $C_{ij}$ arbitrarily chosen from $(1,2)$, $\varphi=1.5$ and $r_0=0.25$. Our initial data are $S(v^0)=6$ and $S(x^0)=4$. The first simulation runs with $\delta=1$ so that the condition of  Theorem \ref{thm: main2} is clearly satisfied and asymptotic flocking with collision avoidance is achieved. See Fig. \ref{fig:ca} for the simulation results. The repelling functions destabilize the agents' velocities in an uncontrollable manner. It then takes the strength of the coupling networks to determine the stability of flocking solutions. In the first case $\delta=1$ ensures the stability because it makes the right hand side of condition of Theorem \ref{thm: main2} unbounded.

\begin{figure*}[h]
\begin{center}
\includegraphics[scale=0.4]{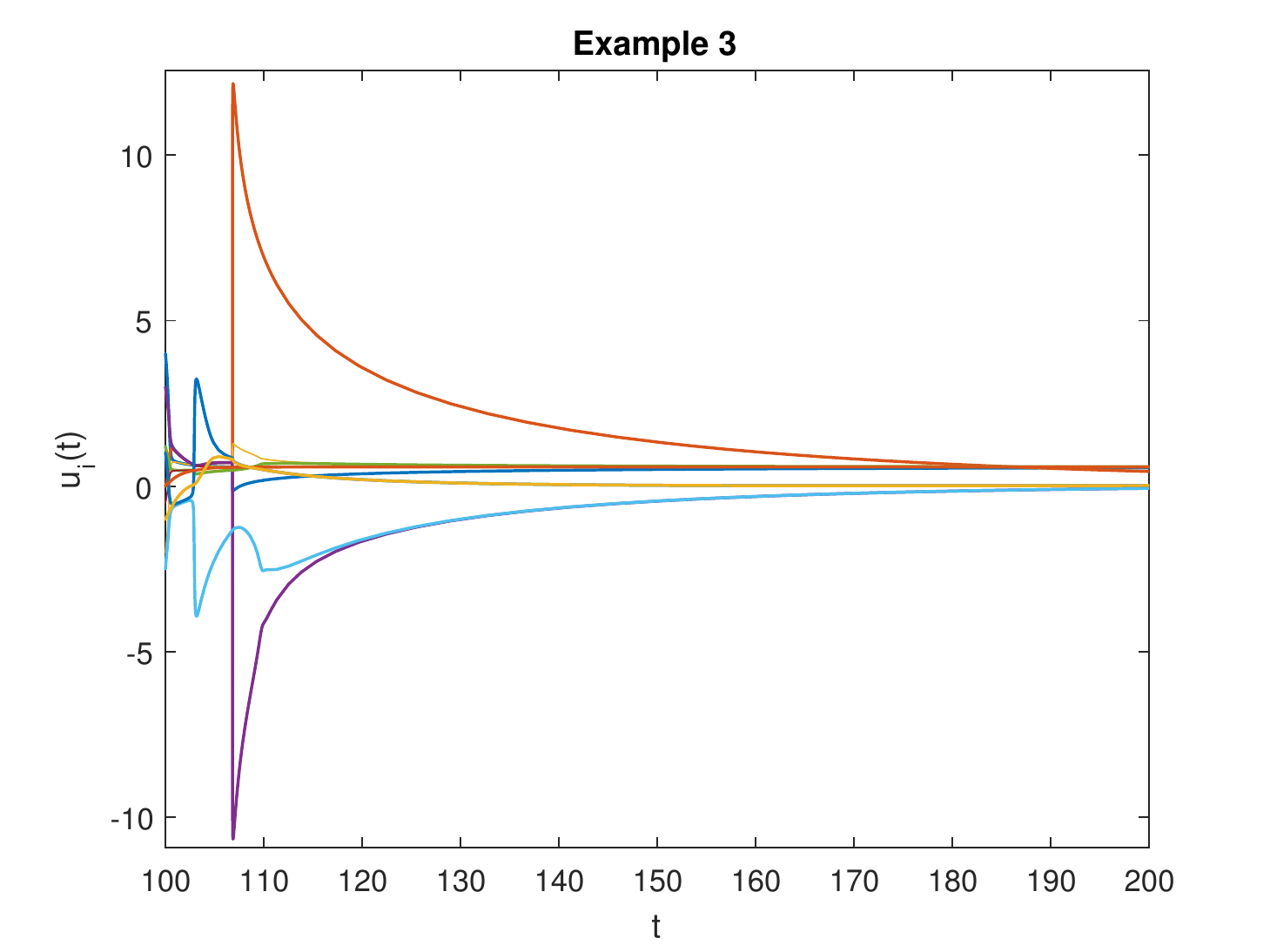}
\includegraphics[scale=0.4]{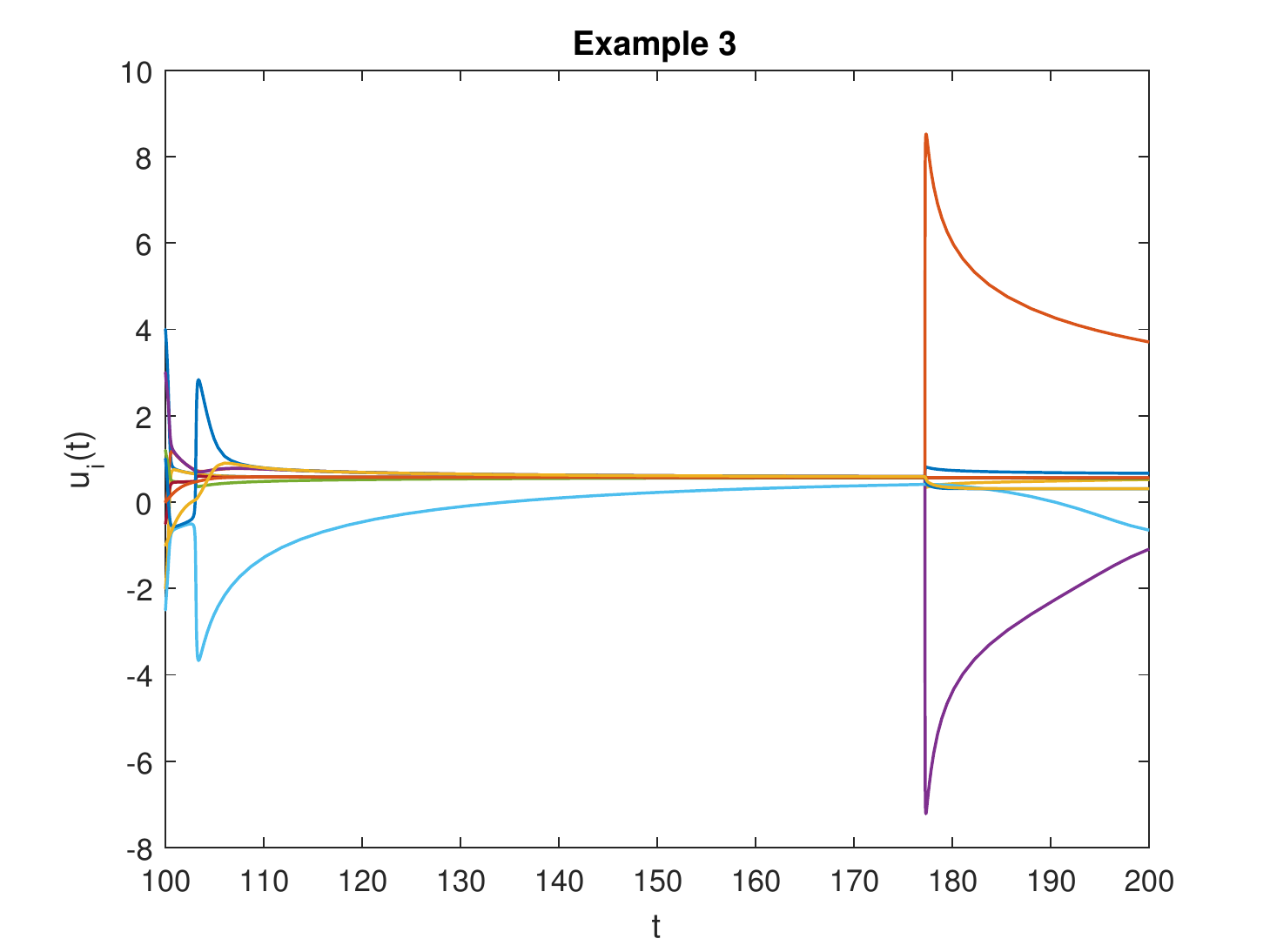}
\includegraphics[scale=0.4]{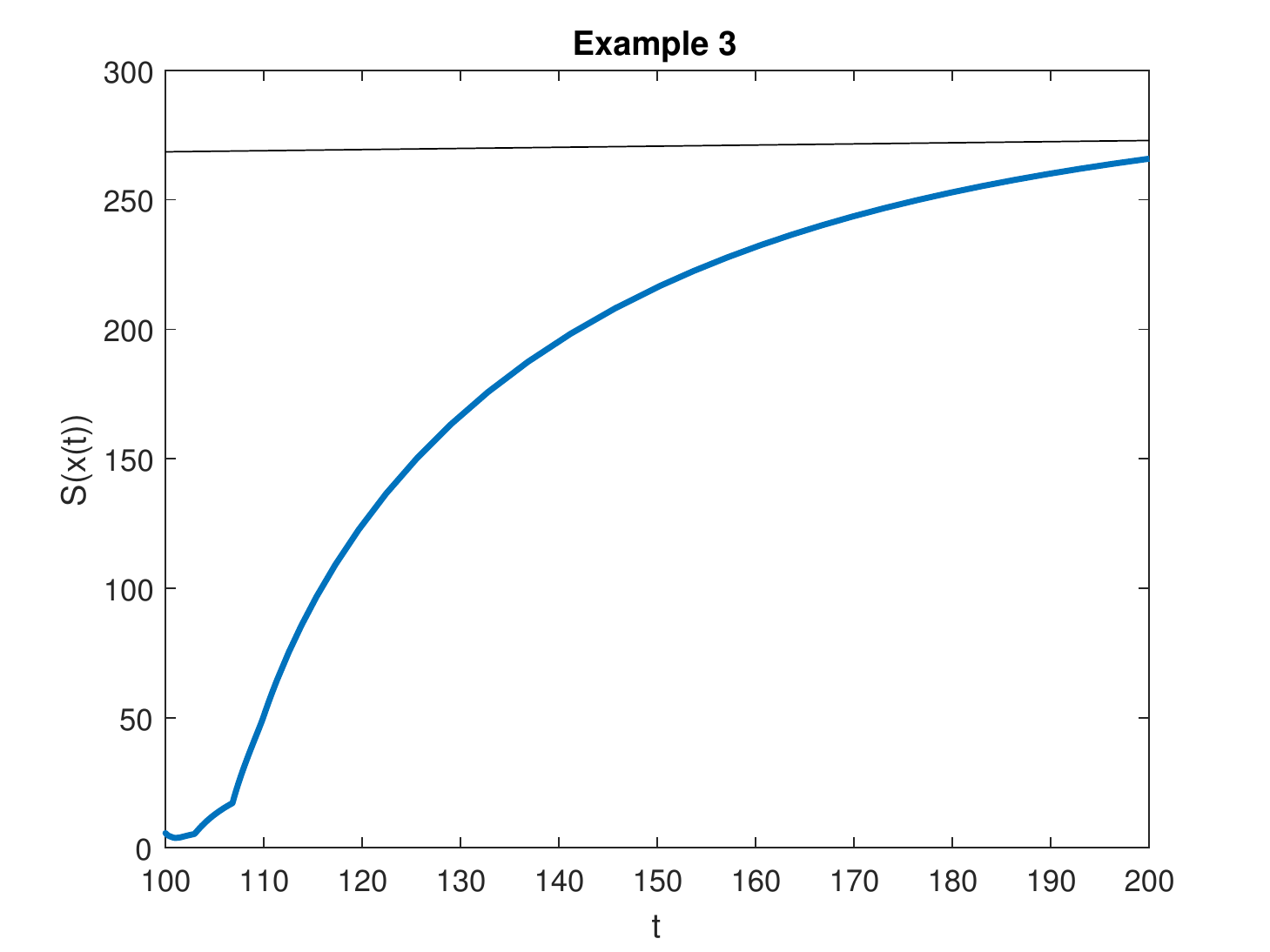}
\includegraphics[scale=0.4]{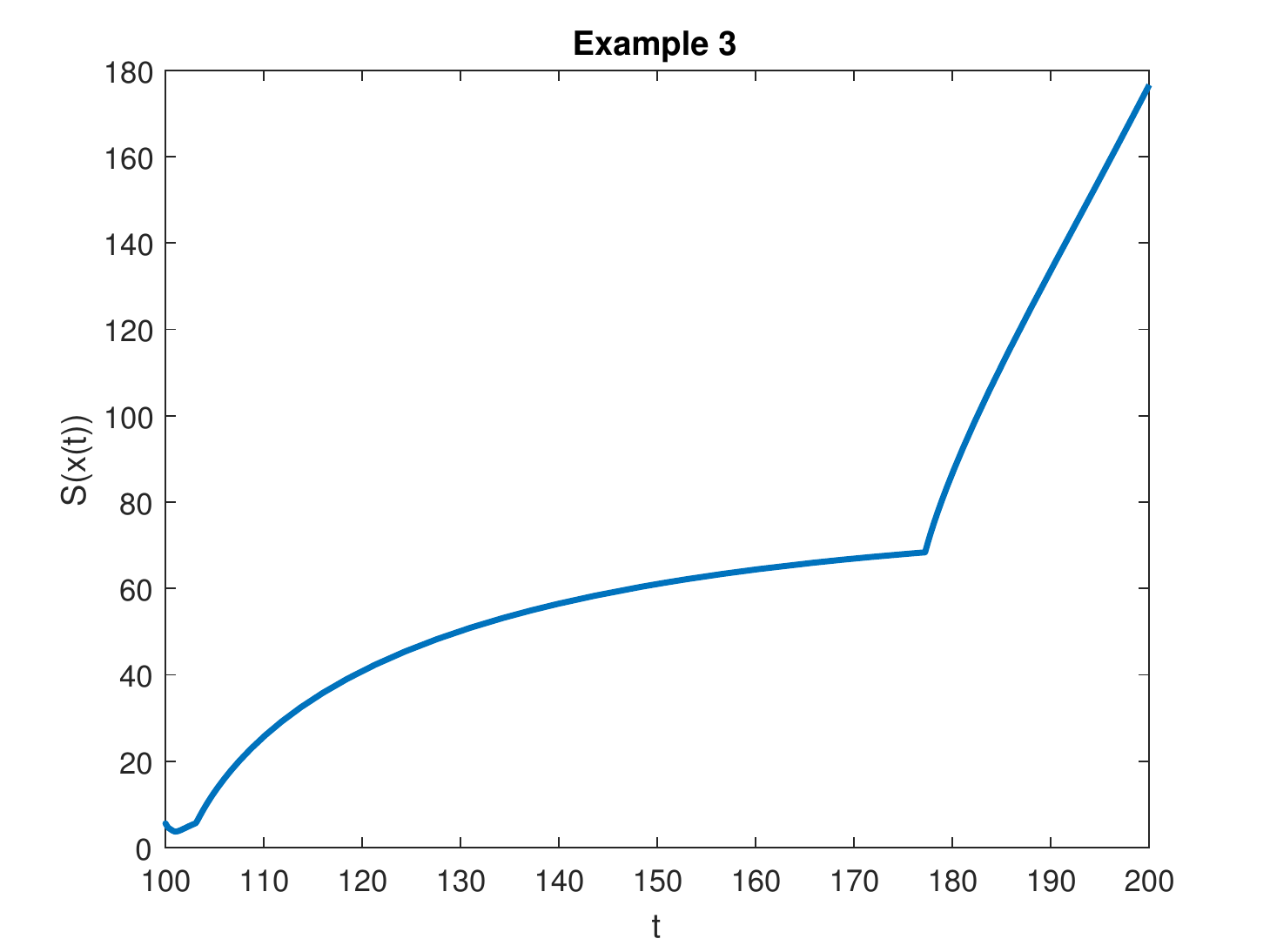}
\includegraphics[scale=0.4]{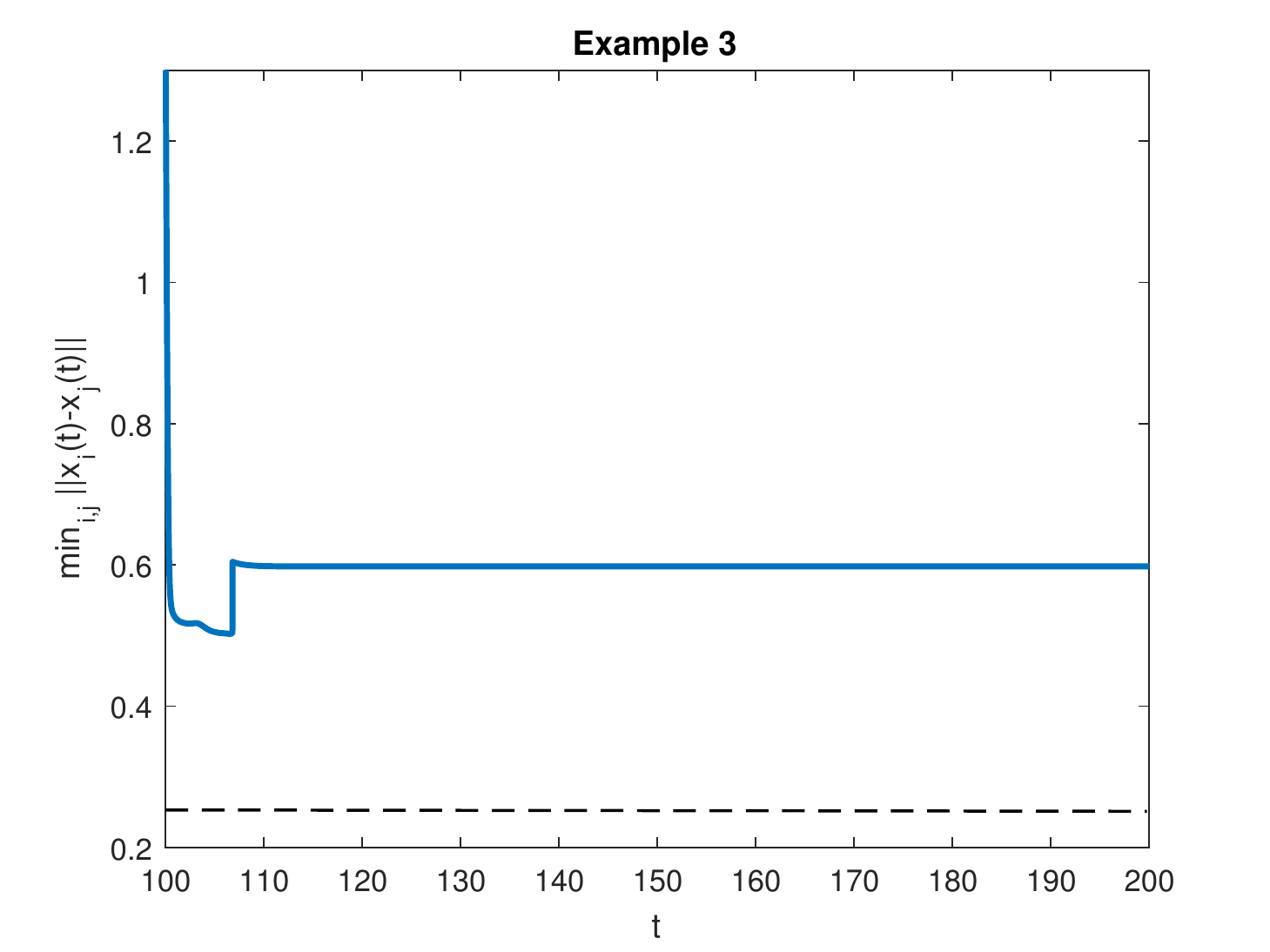}
\includegraphics[scale=0.4]{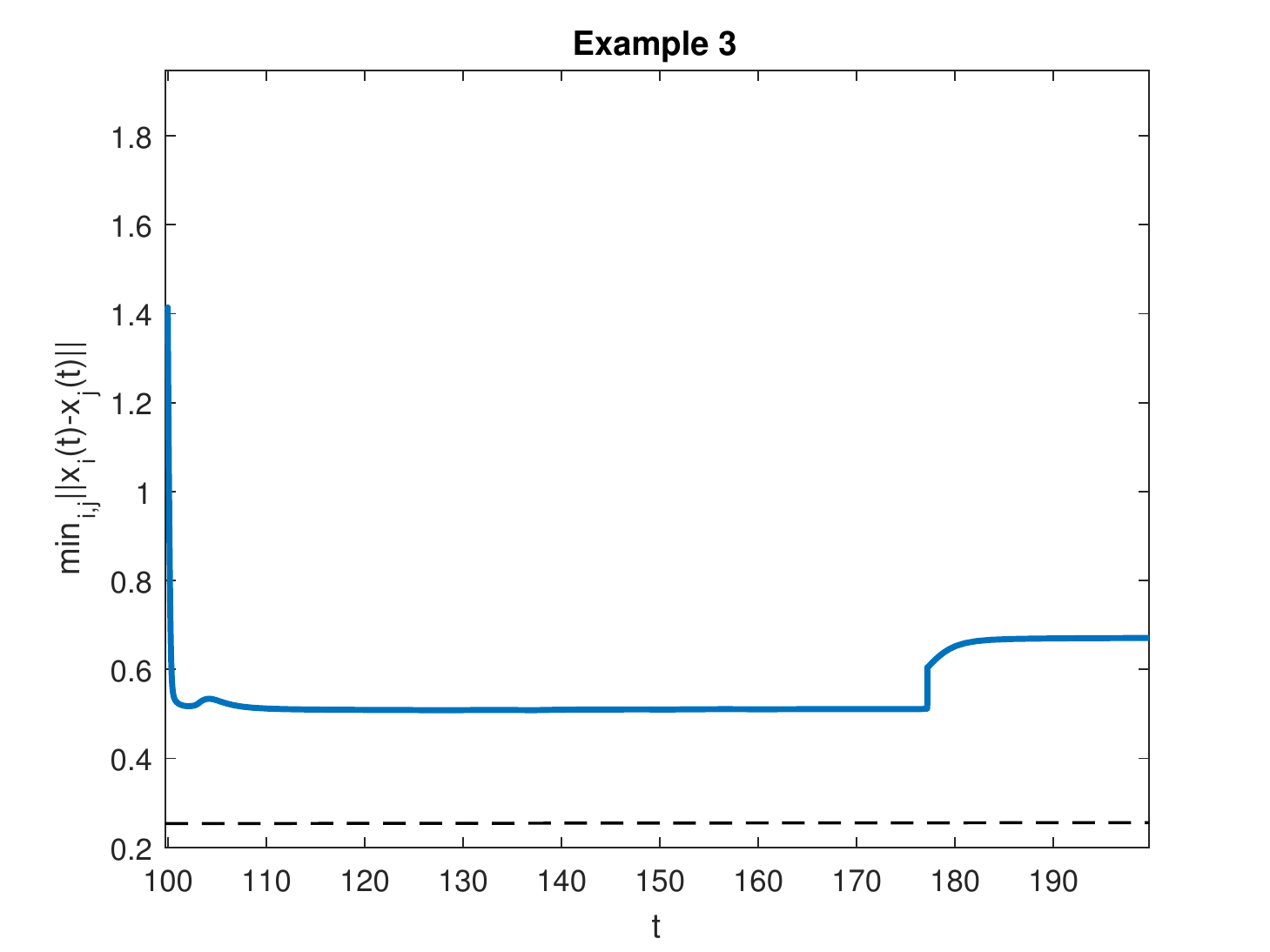}
\caption{Simulation Example 3. Flocking with collision avoidance. The first column is a strong coupling simulation of the velocity curves, the maximum distance and the minimum distance. The second column is a weak coupling simulation with the same quantities. While both simulations achieve collision avoidance, only the first one achieves flocking.}\label{fig:ca}
\end{center}
\end{figure*}

\section{Discussion} \label{sect: discussion} We addressed two variations of the classic non-linear flocking algorithm of Cucker-Smale type with asymmetric coupling rates. The striking similarity in the analysis of these two evidently different algorithms is clear: Both networks include the consensus-based stabilizing term and a potentially destabilizing term. The perturbations affect the stability of the network because it leads to fluctuations in the relative velocities $v_{i}-v_{j}$. This in turn results in weaker coupling strength the level of which can dissolve the network to fail to converge to consensus. This interplay is reflected in the initial configuration sufficient conditions that ensure convergence of the overall network to flocking behavior. 

As in Theorem \ref{thm: standardflocking}, the flocking to synchronization results in Theorem \ref{thm: main1} require convergence conditions to be parametrized by the initial setup states, the coupling strength in addition to the growth of the first derivative of the internal dynamics function. The argumentation is very similar for the case of the collision-free extension. We remark the striking similarity of Theorem \ref{thm: main2} to the results of \cite{Cucker_2011}. In fact Theorem \ref{thm: main2} may be considered as the the asymmetric alternative of \cite{Cucker_2011}.


%
%

Non-linearity imposes investigation of solutions that may not necessarily exist for all times. Theorem \ref{thm: main1} can accept such solutions and may be applied to networks where the solutions escape in finite time. Since this event is of little interest we readily turned to a result concerning solutions of the nominal network that exist for all times and derived claims for the coupled network.

It is yet to be investigated on the dynamics that can emerge with abstract nonlinear perturbations. Our strategy was to focus on reasonable forms of $b_{ij}$ that correspond to real-world mathematical models. In any case one could reasonably assume that positive valued $b_{ij}$'s in \eqref{eq: perturbcsflock} tend to stabilize the network towards a common constant equilibrium, while negative valued perturbation destabilize the network possibly towards  
more complex behaviors.
We conclude this paper on a brief discussion on extensions to our protocols that can be analyzed using similar tools, together with the corresponding limitations that emerge every time.

\subsection{Further Nonlinearities}
Models \eqref{eq: model1} and \eqref{eq: model2} can be easily extended to non-linearities in the spirit of \eqref{eq: non2} or \eqref{eq: non3}. In this case one would require additional assumptions on the type of these non-linearities. For instance note that \eqref{eq: non3} can be re-written as \begin{equation*}
\dot z_i=\sum_j w_{ij}(t,z)(z_i-z_j)
\end{equation*} where $w_{ij}(t,z)= \int_0^1 \frac{\partial}{\partial z}  w_{ij}(t,q z_i-(1-q)z_j)\,dq$. Then typical smoothness or uniformity assumptions on the non-linear $w_{ij}$ can make  a second order version possible to be analyzed with our framework, under appropriate modifications. Similar arguments can be adopted for couplings like \eqref{eq: non2}.

\subsection{The problem of dimensionality and different metrics}
For dynamics in one dimension, all metrics boil down to the absolute value making the analysis simple and elegant. In higher dimensional systems one must carefully chose the most appropriate metric that suits the geometry of the generated solutions. Our analysis dictates the option of $S(y)$ and any discrepancy with the solutions of the nominal dynamics may result in more conservative convergence conditions ( i.e. larger $K$). Attempts towards sharper sufficient conditions should probably allow some structure on the nominal dynamics $g$. In this case, the researcher can leverage on the geometry of the nominal solutions and their qualitative properties so as to come up with an appropriate contraction metric for the synchronization analysis.
\subsection{Connection Topologies}
A careful inspection of \eqref{eq: cc} reveals that the contraction coefficient that provides non-trivial estimates of the stochastic matrix $P$, must be associated with a graph that is rich in edges. In particular, it is asked that for every couple of two agents there is at least one agent that affects both of the aforementioned two. The connectivity we assumed in this work corresponds to a complete graph and clearly covers this case. The interest in networks of Cucker-Smale type lies primarily on the strength of the coupling weights and not on the topological structure. That is why they are usually assumed under all-to-all communication schemes. Looser connectivities are feasible for the synchronization network \eqref{eq: model1} but it involves more elaborate arguments, perhaps along the lines of \cite{SomBarecc15}. A collision free model without complete connectivity is, however, rather unreasonable. If two agents do not communicate somehow their positions are clearly prone to approach each other arbitrarily close as they will simply not notice each other. In either case one should expect even more conservative stability conditions. The weaker the connectivity over the network, the smaller the rate estimates become. In either case, the problem of nonlinear synchronization under weaker connectivity regimes remains of interest and will be considered in the future.

\appendix
\begin{proof}[Proof of Lemma \ref{lem: difineq}] Let $(x(t), v(t)),~t\in [t_0,T)$ the maximal solution of \eqref{eq: model1}. Fix $i,i^{'}\in [n]$, $l\in [r]$ and take $m=\bar{n}\bar{w}+K$. 
where $\bar{n}>n+1$, $\bar{w}$ as in Assumption \ref{assum: f}. From the second line of \eqref{eq: model1} we have 
\begin{equation*}\begin{split}
&\dot v_j^{(l)}=-m v_j^{(l)}+\big(m-d_j\big)v_j^{(l)}+\sum_{k}w_{jk}v_j^{(l)}+g^{(l)}(t,v_j)
\end{split}
\end{equation*} for $j=i,i^{'}$, $w_{jk}=w_{jk}\big(t,x(t)\big)$, $d_j=\sum_{k\neq j} w_{jk}$. Set  $\Delta_{i,i^{'}}(t)=e^{-m(t-t_0)}\frac{d}{dt}\big(e^{m(t-t_0)}[v_i^{(l)}(t)-v_{i^{'}}^{(l)}(t)]\big)$ and substituting the equations on $i$ and $i^{'}$ we obtain
\begin{equation*}
\Delta_{i,i^{'}}(t)=\sum_{k}(\tilde w_{ik}-\tilde w_{i^{'}k})v_k^{(l)}+\sum_{h\neq l}\bigg[\int_0^{1}\frac{\partial}{\partial z^{(h)}}g^{(l)}(t,q v_i+(1-q)v_{i'})\,dq\bigg] \big(v_{i}^{(h)}-v_{i^{'}}^{(h)}\big)
\end{equation*} where  $\tilde w_{ij}=\tilde w_{ij}(t,l)$ defined as
\begin{equation*}
\tilde w_{ij}=\begin{cases}
m-d_i+c_{i,i^{'}}, & j=i \\
w_{ij},  & j\neq i.
\end{cases}
\end{equation*} for $c_{i,i^{'}}=\int_{0}^{1}\frac{\partial }{\partial z^{(l)}}g^{(l)}(t,q v_i(t)+(1-q)v_{i^{'}}(t))\,dq$. Similarly for $\tilde{w}_{i^{'}j}$. Take $a_{j}=(\tilde w_{ij}-\tilde w_{i^{'}j})$ and observe that \begin{equation*}\begin{split}
\sum_j a_j&=\sum_{j\neq i} w_{ij}+w_{ii}-\sum_{j\neq i^{'}} w_{i^{'}j}-w_{i^{'}i^{'}}=d_i+m-d_i+c_{i,i^{'}}-d_{i^{'}}-m+d_{i^{'}}-c_{i,i^{'}}=0.
\end{split}
\end{equation*}
The index $j$ for which $a_{j}>0$ is denoted by $j^+$ and the index for which $a_{j}\leq 0$ is denoted by $j^-$. Set $\theta=\theta(t)$ \begin{equation*}\begin{split}
\theta&=\sum_{j^+}a_{j^+}=\sum_{j^+}|a_{j^+}|=-\sum_{j^-}a_{j^-}=\sum_{j^-}|a_{j^-}|=\frac{1}{2}\sum_{j}|a_j|=\frac{1}{2}\sum_{j}|\tilde{w}_{ij}-\tilde w_{i'j}|
\end{split}
\end{equation*} Then for $t\in [t_0,T)$  \begin{equation*}
\begin{split}
\Delta_{i,i^{'}}(t)&=\theta \bigg(\frac{\sum_{j^+}|a_{j^+}|v_{j^+}}{\theta}-\frac{\sum_{j^-}|a_{j^-}|v_{j^-}}{\theta}\bigg)+\sum_{h\neq l}\bigg[\int_0^{1}\frac{\partial}{\partial z^{(h)}}g^{(l)}(t,q v_i+(1-q)v_{i'})\,dq\bigg] \big(v_{i}^{(h)}-v_{i^{'}}^{(h)}\big)\\
&\leq \bigg(\theta+\sum_{h\neq l}\bigg|\int_0^{1}\frac{\partial}{\partial z^{(h)}}g^{(l)}(t,q v_i+(1-q)v_{i'})\,dq\bigg|\bigg) S(v)
\end{split}
\end{equation*} But from the identity $|x-y|=x+y-2\min\{x,y\}$ and the particular $m$ we chose, it can be deduced that 
\begin{equation*}
\begin{split}
\theta=m+c_{i,i^{'}}-\sum_{k\neq i,i'}\min\{w_{ik}(t),w_{i'k}(t)\}
\end{split}
\end{equation*}
So that $$ \Delta_{i,i^{'}}(t)\leq  \bigg( m+K-\sum_{k\neq i,i'}\min\{w_{ik}(t),w_{i'k}(t)\} \bigg) S(v).$$ Finally, for $i,i'$ and $l$ that maximize $|v_i^{(l)}(t)-v_{i'}^{(l)}(t)|$, we have \begin{equation*}\begin{split}
\frac{d}{dt}S(v(t))&=\frac{d}{dt}\big[e^{-m(t-t_0)}S\big(e^{m(t-t_0)} v(t)\big)\big]=-m S(v(t))+\Delta_{i,i^{'}}(t) \leq  \big[K-n\psi\big(S(x(t))\big)\big]S\big(v(t)\big).
\end{split}
\end{equation*} that is true for $t\in [t_0,T)$.
\end{proof}
\begin{proof}[Proof of Lemma \ref{lem: difineq2} ] Consider the maximal solution $\big(x(t),v(t)\big)$ on $[t_0,T)$. In any $l\in [r]$ dimension, $v_i^{(l)}$ satisfies $$\dot v_i=-m v_i+(m-d_i)v_i+\sum_{j}(w_{ij}+b_{ij})v_j,$$ where $w_{ij}=w_{ij}(t,x(t))$, $b_{ij}=b_{ij}(t,x(t),v(t))$ as in \eqref{eq: bij}, $d_i=d_i(t)=\sum_{j}(w_{ij}+b_{ij})$ and $m=m(t)$ to be determined below. Note that we removed the $l$ dependency because the dynamics are identical in all $[r]$ dimensions. We set $\phi(t,t_0)=\exp\{\int_{t_0}^{t}m(s)\,ds\}$. Following the proof of Theorem \ref{thm: main1}, $\Delta_{i,i^{'}}(t):=\phi^{-1}(t,t_0)\frac{d}{dt}\big[\phi(t,t_0)|v_i(t)-v_{i^{'}}(t))|\big]$, yields $$\Delta_{i,i^{'}}(t)=\sum_j \alpha_j v_j$$ for 
\begin{equation*}
\begin{split}
\alpha_j=\begin{cases}
(w_{ij}-w_{i'j})+(b_{ij}-b_{i'j}), & j\neq i,i^{'}\\
m-d_i-w_{ii'}-b_{i'i},&  j=i,\\
-m+d_{i'}+w_{i'i}+b_{ii'},&  j=i^{'}
\end{cases}
\end{split}
\end{equation*} Let $j'$ denote the indices $j$ for which $\alpha_{j}>0$ and $j''$ the indices for which $\alpha_{j}\leq 0$. Then $$\Delta_{i,i^{'}}(t)=\sum_{j'}|\alpha_{j'}|v_{j'}-\sum_{j''}|\alpha_{j''}|v_{j''}.$$ Again, it holds that $\sum_{j}\alpha_j\equiv 0$ so we take $\theta_{ii^{'}}=\theta_{ii^{'}}(t)$ to be the sum of the positive $\alpha_j$.
\begin{equation*}
\begin{split}
\theta_{ii^{'}}&=\sum_{j'}|\alpha_{j'}|=-\sum_{j''}\alpha_{j''}=\sum_{j''}|\alpha_{j''}|=\frac{1}{2}\sum_{j}|\alpha_j|\\
&\leq \frac{1}{2}\sum_{j}|w_{ij}-w_{i'j}|+\frac{1}{2}\sum_{j}|b_{ij}-b_{i'j}|\\
&=m-\sum_{j\neq i}\min\{w_{ij},w_{i'j}\}-\sum_{j\neq i}\min\{b_{ij},b_{i'j}\}
\end{split} 
\end{equation*} in view of the identity $2\min\{x,y\}=x+y-|x-y|$. 
\begin{equation*}\begin{split}
&\Delta_{i i^{'}}(t)=\theta\bigg(\frac{\sum_{j'}|w_{j'}|v_{j'}}{\sum_{j'}|w_{j'}|}-\frac{\sum_{j''}|w_{j''}|v_{j''}}{\sum_{j''}|w_{j''}|}\bigg)
\end{split}
\end{equation*} 
Invoking the estimate on $\theta$
\begin{equation*}
\begin{split}
&\Delta_{i i^{'}}(t)\leq \big(m-\sum_{j\neq i}\min\{w_{ij},w_{i'j}\}-\sum_{j\neq i}\min\{b_{ij},b_{i'j}\}\big) S(v)=\big(m-\rho_{i,i^{'}}\big) S(v)-\Gamma_{i,i^{'}}
\end{split}
\end{equation*}
where $\rho_{i,i^{'}}$, $\Gamma_{i,i^{'}}$ as in the statement of the Lemma. Observe that $\frac{d}{dt}|v_i(t)-v_{i^{'}}(t)|=-m|v_{i}-v_{i^{'}}(t)|+\Delta_{i i^{'}}(t) $ and substitute the estimate of $\Delta_{i,i^{'}}(t)$ to conclude.
\end{proof}

\bibliographystyle{plain}    
\bibliography{bibliography} 
\end{document}